\let\OLDenumerate\enumerate
\renewcommand\enumerate{\OLDenumerate\addtolength{\itemsep}{0.5ex}}
\definecolor{rosso}{RGB}{162,0,0}
\definecolor{verde}{RGB}{0,100,0}
\definecolor{blu}{RGB}{0,0,162}
\definecolor{bla}{RGB}{100,0,100}
\newcommand{\domark}{%
  \vbox to 0pt{
    \kern-\dp\strutbox
    \smash{\llap{*\kern1em}}
    \vss
  }%
}
\newcommand{\MD}{\textsf{MD}}
\newcommand{\QQ}{\mathbb{Q}}
\newcommand{\ZZ}{\mathbb{Z}}
\newcommand{\NN}{\mathbb{N}}
\renewcommand{\AA}{\mathbb{A}} 
\newcommand{\XX}{\mathcal{X}} 
\newcommand{\YY}{\mathcal{Y}} 
\newcommand{\can}{{\mathop{can}}}
\newcommand{\ass}{{\mathop{can}}} 
\newcommand{\rig}{{\mathop{rig}}}
\newcommand{\Xcan}{\mathcal{X}^{\can}}
\newcommand{\Xass}{{\mathcal{X}^{\ass}}}
\newcommand{\Xrig}{\mathcal{X}^{\rig}}
\newcommand{\Ycan}{\mathcal{Y}^{\can}}
\newcommand{\OO}{\mathcal{O}} 
\newcommand{\LL}{\mathcal{L}} 
\newcommand{\KK}{\mathcal{K}} 
\newcommand{\RR}{\mathcal{R}} 
\renewcommand{\SS}{\mathcal{S}} 
\newcommand{\II}{\mathcal{I}}
\newcommand{\quot}[2]{\left. \raisebox{0.5ex}{$\displaystyle #1$} \middle/ \raisebox{-0.5ex}{$\displaystyle #2$} \right.}
\newcommand{\stackquot}[2]{\left[ \raisebox{0.5ex}{$\displaystyle #1$} \middle/ \raisebox{-0.5ex}{$\displaystyle #2$} \right]}
\newcommand{\kk}{\mathbbm{k}} 
\newcommand{\pt}{\mathsf{pt}} 
\newcommand{\BB}{\mathcal{B}} 
\newcommand{\qq}{\mathfrak{q}} 
\DeclareMathOperator{\im}{im}
\DeclareMathOperator{\Hom}{Hom}
\let\mod\relax
\DeclareMathOperator{\mod}{mod}
\DeclareMathOperator{\id}{id}
\let\div\relax
\DeclareMathOperator{\div}{div}
\DeclareMathOperator{\CDiv}{CDiv}
\DeclareMathOperator{\Pic}{Pic}
\DeclareMathOperator{\WDiv}{WDiv}
\DeclareMathOperator{\Cl}{Cl}
\DeclareMathOperator{\Spec}{Spec}
\DeclareMathOperator{\supp}{supp}
\DeclareMathOperator{\sm}{sm} 
\DeclareMathOperator{\sing}{sing} 
\DeclareMathOperator{\irr}{irr} 
\DeclareMathOperator{\Jirr}{J_{\irr}} 
\newcommand{\genby}[1]{{\langle #1 \rangle}} 
\newcommand{\coloneqq}{\mathrel{\mathop:}=}
\newcommand{\eqqcolon}{=\mathrel{\mathop:}}
\newcommand{\xto}{\xrightarrow}
\newcommand{\into}{\hookrightarrow}
\newcommand{\onto}{\twoheadrightarrow}
\newcommand{\FctArray}[1]{\begin{array}{r*{5}{@{\:}c}c} #1 \end{array}} 
\newtheorem{theorem}{Theorem}[section]
\newtheorem*{maintheorem}{Main Theorem}
\newtheorem*{theorem*}{Theorem}
\newtheorem*{lemma*}{Lemma}
\theoremstyle{definition}
\newtheorem*{example*}{Example}
\newcommand{\step}[1]{\par\medskip\par\noindent\textit{#1.}} 
\newcommand{\bib}[6]{{\bibitem{#2} #3, {\emph{#4},} #5#6.}}
\newcommand{\arXiv}[1]{{\href{http://arxiv.org/abs/#1}{\texttt{arXiv:#1}}}}
  \newaliascnt{proposition}{theorem}
  \newtheorem{proposition}[proposition]{Proposition}
  \newaliascnt{lemma}{theorem}
  \newtheorem{lemma}[lemma]{Lemma}
  \newaliascnt{corollary}{theorem}
\theoremstyle{definition}
  \newaliascnt{definition}{theorem}
  \newtheorem{definition}[definition]{Definition}
  \newaliascnt{remark}{theorem}
  \newtheorem{remark}[remark]{Remark}
  \newaliascnt{example}{theorem}
  \newtheorem{example}[example]{Example}
\title{Maps of Mori dream spaces}
\author{Andreas Hochenegger}
\address{Dipartimento di Matematica ``Federigo Enriques'', Universit\`a degli Studi di Milano, via Cesare Saldini 50, 20133 Milano, Italy}
\email{andreas.hochenegger@unimi.it}
\author{Elena Martinengo}
\address{Dipartimento di Matematica ``Giuseppe Peano'', Universit\`a degli Studi di Torino, via Carlo Alberto 10, 10123 Torino, Italy}
\email{elena.martinengo@unito.it}
\keywords{Mori dream spaces, Mori dream stacks, maps in homogeneous coordinates}
\subjclass[2010]{14A20, 14L24, 14E30}
\begin{document}

\maketitle

\begin{abstract}
Let $\phi\colon X \to Y$ be a map of $\QQ$-factorial Mori dream spaces. 
We prove that there is a unique \emph{Cox lift} $\Phi\colon \XX \to \YY$ of Mori dream stacks coming from a homogeneous homomorphism $\RR(Y) = \RR(\YY) \to \RR(\XX)$, where $\YY$ is a canonical stack of $Y$ and $\XX$ is obtained from $X$ by root constructions, and $\phi$ is induced from $\Phi$ by passing to coarse moduli spaces.
We also apply this techniques to show that a Mori dream quotient stack is obtained by roots from its canonical stack.
\end{abstract}


\section{Introduction}
Given a map $\phi\colon X \to Y$ of $\QQ$-factorial Mori dream spaces, 
one can ask whether this map lifts to a homogeneous homomorphism $\RR(Y) \to \RR(X)$ of Cox rings. 
As soon as $Y$ is singular, such a homomorphism needs not to exist, as pulling back Weil divisors is not well-defined.

In \cite{Brown-Buczynski}, Gavin Brown and Jaros\l aw Buczy\'nski show how to lift rational maps between toric varieties to (multi-valued) maps between their respective Cox rings.
In this article, we show that this construction has a geometrical interpretation involving quotient stacks and works well for Mori dream spaces. 
This leads to our main result; see \autoref{thm:lift} for a more precise and more general version.

\begin{maintheorem}
\hypertarget{maintheorem}{}
Let $\phi\colon X \to Y$ be a map between $\QQ$-factorial Mori dream spaces 
where $X$ is complete with factorial Cox ring $\RR(X)$.
There is a Mori dream quotient stack $\XX$ with coarse moduli space $X$ with the
following properties:
\begin{enumerate}
\setlength{\multicolsep}{0ex}
\item  \label{itm:mainlift_root} The stack $\XX$ is built from the canonical Mori dream stack $\Xass$ of $X$ by root constructions
with prime divisors and line bundles. 
\item  \label{itm:mainlift_quotient} There is a homogeneous morphism $\Phi^*\colon
\RR(Y) \to \RR(\XX)$ such that
the following diagrams commute: 
\[
\xymatrix@R=3ex@C=1em{
\RR(Y) \ar[r]^{\Phi^*} & \RR(\XX) &&
 \XX \ar[r]^-\Phi \ar[d] & \YY^{\ass} \ar[d]\\
\RR(Y; \Pic(Y)) \ar[u] \ar[r]_-{\phi^*} & \RR(X) \ar[u] &&
 X \ar[r]_\phi & Y
}
\]
where $\YY^{\ass}$ is the canonical Mori dream stack of $Y$. 
\item  The stack $\XX$ is minimal:
given any Mori dream quotient stack $\XX'$ that satisfies \eqref{itm:mainlift_quotient},
then the map $\XX'\to X$ to the coarse moduli space factors through $\XX$.
\end{enumerate}
We call $\Phi\colon\XX \to \YY^{\ass}$ the \emph{Cox lift} of the map $\phi\colon X \to Y$.
\end{maintheorem}

The following example illustrates this theorem.

\begin{example}[{cf.\ \cite[Ex.~1.1.1]{Brown-Buczynski}}]
\label{ex:A2_mod_mu2}
Let $\xi$ be a primitive $k$-th root of unity acting on $\AA^2$ by $\xi \cdot (x,y) = (\xi^a x, \xi^b y)$. The quotient $\AA^2/\mu_k$ is then called a $\frac1k(a,b)$-singularity.
Consider the $\frac12(1,1)$-singularity $\AA^2/\mu_2$.
In the coordinates $\AA^2 = \Spec \kk[x,y]$, the quotient is given as
\[
\AA^2/\mu_2 = \Spec \kk[x,y]^{\ZZ_2} = \Spec \kk[x^2,xy,y^2] = \Spec \kk[u,v,w]/uw-v^2.
\]
Geometrically, this is a double cone with an isolated singularity at $0$.
Let $\AA^1 = \Spec \kk[t]$ be a Weil divisor passing through the singular origin. For example, we can choose the embedding $\phi\colon\AA^1 \into \AA^2/\mu_2$ as in \autoref{fig:A2_mod_mu2} given in terms of the coordinate rings as
\[
\FctArray{
\phi^*\colon & \kk[x^2,xy,y^2] & \to & \kk[t] \\
& x^2     & \mapsto & t\\
& xy, y^2 & \mapsto & 0\\
}
\]

The Cox ring of $Y = \AA^2/\mu_2$ is $\RR(Y) = \kk[x,y]$ graded by $\Cl(Y) = \ZZ_2$, which is given by $\deg(x)=\deg(y) = 1$. The Cox ring of $X = \AA^1$ is again the ring $\RR(X) = \kk[t]$ which is graded trivially since $\Cl(X)=0$.
As the authors of \cite{Brown-Buczynski} point out, the best way to lift the map $\phi^*$ to the Cox rings is
\[
\FctArray{
\Phi^*\colon & \kk[x,y] & \to & \kk[\sqrt{t}] \\
& x & \mapsto & \sqrt{t}\\
& y & \mapsto & 0
}
\]
Here $\sqrt{t}$ should mean that we consider an extension of $\kk[t]$, namely, the ring $\kk[t][z]/(z^2-t) = \kk[\sqrt{t}]$, where we write suggestively $\sqrt{t} \coloneqq z$.
As shown in \cite{Brown-Buczynski}, this map in homogeneous coordinates has several nice properties, like the images of points or the pullback of divisors can be easily computed.

Our main observation is that the map of Cox rings above is inherently a map between toric stacks.
The singular space $\AA^2/\mu_2$ is the coarse moduli space of the smooth quotient stack
\[
\YY = \stackquot{\Spec \kk[x,y]}{\mu_2}.
\]
Although the origin is a smooth point of $\YY$, it is still different from all other points, since it has the non-trivial stabiliser $\mu_2$.

So if we consider again the divisor as above through the origin, the corresponding point on the embedded divisor should also have the $\mu_2$-stabiliser.
We can obtain such a stack by performing a root construction at the origin on $X = \AA^1$, to get $\XX = \sqrt[2]{0/\AA^1}$. This stack has the following description as a quotient
\[
\XX = \stackquot{\Spec \kk[\sqrt{t}]}{\mu_2},
\]
where $\mu_2$ acts by swapping $\sqrt{t} \leftrightarrow -\sqrt{t}$.

The inclusion $\Phi\colon\XX \into \YY$ induces naturally the map $\Phi^*$ above, which becomes just the pullback of global sections.
Moreover, it induces the map $\phi\colon\AA^1 \into \AA^2/\mu_2$ from the beginning (by passing to the coarse moduli spaces).
\end{example}

\begin{figure}
\begin{tikzpicture}[scale=0.4,thick]
\draw (-2,-5) -- (0,0);
\draw [fill=white] (-1.99,3.98) -- (1.99,3.98) -- (0,0) -- cycle;
\draw [fill=gray!20] (0,4) circle (1.99cm and 0.4cm);
\draw [fill=white] (0,-4) circle (1.99cm and 0.4cm);
\draw [fill=white] (-1.99,-3.98) --(0,0) -- (1.99,-3.98);
\draw (0,0) -- (2,5);
\draw [fill=gray] (0,0) circle (0.1cm);
\end{tikzpicture}
\hspace{2cm}
\begin{tikzpicture}[scale=0.4,thick]
\draw (-4.5,-4.5) -- (-4.5,4.5) -- (4.5,4.5) -- (4.5,-4.5) -- (-4.5,-4.5);
\draw (-2,-5) -- (2,5);
\draw (.5,-.2) node {\rotatebox{50}{$\circlearrowleft$}};
\draw (1.3,-.2) node {$\scriptstyle\mu_2$};
\draw [fill=gray!20] (0,0) circle (0.1cm);
\end{tikzpicture}
\caption{The embedding of $\AA^1$ into $\AA^2/\mu_2$ and of $\XX$ into $\YY$.}
\label{fig:A2_mod_mu2}
\end{figure}

The key idea of this article is that, given a map $\phi\colon X \to Y$, we may not be able to pull back an $r \in \RR(Y)$ if it comes as a section of a Weil divisor on $Y$. But, when $Y$ is at least $\QQ$-factorial, then some positive multiple $r^n$ becomes Cartier, so there is a pullback $\phi^*(r^n) \in \RR(X)$.
In order to get a homomorphism, we have to add roots of (the factors of) $r^n$ to $\RR(X)$.

It seems that Takeshi Kajiwara observed this for the first time in an example in \cite[Ex.~4.12]{Kajiwara}.
In \cite{Berchtold}, Florian Berchtold addressed also the question of lifting maps to Cox rings, but focusing on the cases where no roots are needed.
The question of lifting rational maps between toric varieties was systematically treated in \cite{Brown-Buczynski} by Gavin Brown and Jaros\l{}aw Buczy\'nski, where roots are used. As they do not use the language of stacks, this leads to the notion of multi-valued maps.

In this article, we partly generalise the results from \cite{Brown-Buczynski} in the sense, that we discuss (regular) maps between $\QQ$-factorial Mori dream spaces instead of rational maps between arbitrary toric varieties.
Additionally, the Cox lift obtained here fulfills a certain uniqueness and can be determined quite algorithmically.
Jaros\l{}aw Buczy\'nski and Oskar K\k edzierski have also obtained independently a generalisation for rational maps between (even not $\QQ$-factorial) Mori dream spaces in \cite{Buczynski-Kedzierski}.
Finally, our construction is similar in spirit to the process of \emph{destackification} as introduced in \cite{Bergh} by Daniel Bergh.

\medskip

This article is organised as follows.
In \autoref{sec:preliminaries} we recall the necessary background. This includes Cox rings, Mori dream spaces, their generalisation to stacks and root constructions.
For the latter, we will refer mostly to our article \cite{HM}. But there is a slight difference; in [loc.~cit.] Mori dreams stacks are required to be smooth, whereas here we only ask for their Cox rings to be graded factorial and normal.
In \autoref{sec:coxlift}, we will prove our main result, \autoref{thm:lift}.
The proof of this theorem is used in \autoref{sec:application} to show that a Mori dream stack is a Mori dream quotient stack if and only if it is obtained by root constructions along prime divisors and line bundles from its canonical stack. This generalises the results on Mori dream quotient stacks of our previous article, where we asked for smoothness.

\subsubsection*{Notations and Conventions}

In this article, $\kk$ will always be an algebraically closed field of characteristic $0$, since most results on Mori dream spaces and GIT quotients are only available in this setting. 
When speaking of algebraic stacks, we work in the \'etale topology.

\subsubsection*{Acknowledgements}
We thank Jaros\l{}aw Buczy\'nski and Oskar K\k edzierski very much for reading carefully through a preliminary version of this article and giving many valuable comments.
Many thanks go also to Antonio Laface for some helpful discussions and explanations.
We are grateful to Rita Pardini and Fabio Tonini for answering our questions.
Finally, we thank anonymous referees for comments improving the article.

\section{Preliminaries}
\label{sec:preliminaries}

In the following, we recall basic facts about Mori dream spaces and stacks, mostly to fix notation. The references are \cite{Hu-Keel,Coxrings} for statements about Mori dream spaces and \cite{HM} for those about Mori dream stacks.
As a general reference for the theory of stacks serves the book \cite{Olsson}. As we are dealing here mainly with quotient stacks, we recommend the article \cite{Alper-Easton} as it gives reformulations of equivariant geometry into the language of quotient stacks.

\subsection*{Mori dream spaces}

We call an irreducible and normal variety $X$ a \emph{Mori dream space} 
if
\begin{enumerate}
\item $X$ is $\QQ$-factorial,
\item $H^0(X,\OO_X^*) = \kk^*$,
\item $\Cl(X)$ is finitely generated as a $\ZZ$-module,
\item its Cox ring $\RR(X)$ is finitely generated as a $\kk$-algebra.
\end{enumerate}
We remark that this is different from the characterisation of Mori dream spaces given in \cite[Prop.~2.9]{Hu-Keel} (where $X$ is assumed to be projective and the quotient is taken by a torus) as the focus there is on the minimal model program. It is almost the same as in \cite[\S 1.6.3]{Coxrings} where the focus is (as here) on quotients (and also certain prevarieties are allowed).

\begin{proposition}[{cf.\ \cite[Prop.~2.9]{Hu-Keel} and \cite[\S 1.6.3]{Coxrings}}]
If $X$ is a Mori dream space, then 
\[
X = \quot{\Spec \RR(X) \setminus V}{\Hom(\Cl(X),\kk^*)}
\]
where the \emph{irrelevant locus} $V = V_X$ is the vanishing locus of the \emph{irrelevant ideal}
\[
\Jirr(X) = ( f \in \RR(X) \text{ homogeneous } \mid X_f \coloneqq X \setminus \{f=0\} \text{ is affine}).
\]
\end{proposition}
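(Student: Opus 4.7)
The plan is to exhibit $X$ as a categorical (in fact good) quotient of $\Spec \RR(X) \setminus V$ by the diagonalizable group $G = \Hom(\Cl(X), \kk^*)$, which acts on $\Spec \RR(X)$ via the $\Cl(X)$-grading on $\RR(X)$. By construction, $V$ is the complement of the union of the $G$-stable principal affine opens $\Spec \RR(X)_f$ for $f$ homogeneous with $X_f$ affine. The local model I want to establish is: for each such $f$, the open $X_f \subseteq X$ is canonically isomorphic to $\Spec (\RR(X)_f)_0$, the categorical quotient of $\Spec \RR(X)_f$ by $G$, using that the ring of $G$-invariants is exactly the degree zero part of the $\Cl(X)$-grading.

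First I would establish this local identification. Given a homogeneous $f$ of class $[D] \in \Cl(X)$ with $X_f$ affine, any regular function on $X_f$ should be expressible as $g/f^n$ for some $g \in H^0(X, \OO_X(nD)) \subset \RR(X)_{[nD]}$: multiplying by a high enough power of $f$ clears the pole along $\{f = 0\}$ and produces an honest global section of $\OO_X(nD)$. This gives the isomorphism $\OO_X(X_f) \cong (\RR(X)_f)_0$, which, combined with the fact that $(\RR(X)_f)_0 = \RR(X)_f^G$, is the desired local quotient statement.

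Second, I would show that the opens $X_f$ with $f \in \Jirr(X)$ homogeneous cover $X$. Given $x \in X$, take any affine open neighborhood $U$ of $x$, let $D$ be the reduced Weil divisor $X \setminus U$, and use $\QQ$-factoriality to pass to a positive multiple $nD$ that is Cartier. The global section of $\OO_X(nD)$ cutting out $nD$ is then a homogeneous $f \in \RR(X)_{[nD]}$ with $X_f = U$ affine and $x \in X_f$. Gluing the local quotient descriptions along overlaps $X_{fg} \subset X_f$ then yields the global identification $X \cong (\Spec \RR(X) \setminus V)/G$.

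The main obstacle I expect is this covering step, which genuinely exploits $\QQ$-factoriality together with the finite generation of $\Cl(X)$ and of $\RR(X)$; without $\QQ$-factoriality one cannot guarantee that a given point of $X$ lies in an affine open cut out by a homogeneous element of the Cox ring. Once the cover is in place, the compatibility of the local quotient maps on overlaps is essentially formal, and finite generation of $\RR(X)$ ensures that the resulting categorical quotient is realised as a scheme rather than merely a ringed space.
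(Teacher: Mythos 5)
The paper offers no proof of this proposition at all --- it is quoted from \cite[Prop.~2.9]{Hu-Keel} and \cite[\S 1.6.3]{Coxrings} --- so there is no internal argument to compare against. Your proposal is essentially the standard proof from those references (local identification $X_f\cong\Spec(\RR(X)_f)_0$ via clearing poles, covering $X$ by affine opens of the form $X_f$, gluing the local good quotients), and it is correct in outline.

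Two remarks. First, your covering step silently uses the fact that the complement of a nonempty affine open subset of a normal separated variety is pure of codimension one; this is exactly what allows you to write $X\setminus U$ as the support of a reduced Weil divisor $D$, and it should be stated (it fails without separatedness). Second, you misplace the role of $\QQ$-factoriality. The covering step does not need it: the canonical section $1\in H^0(X,\OO_X(D))$ of the effective Weil divisor $D$ is already a homogeneous element $f\in\RR(X)_{[D]}$ with $X_f=X\setminus\supp D=U$, whether or not any multiple of $D$ is Cartier, so the detour through a Cartier multiple $nD$ is unnecessary, and your claim that the covering ``genuinely exploits $\QQ$-factoriality'' is not correct --- the good-quotient presentation holds for any normal variety with $\OO^*(X)=\kk^*$ and finitely generated class group and Cox ring. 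Where $\QQ$-factoriality actually enters is in upgrading the good quotient you construct to a geometric one: it is equivalent to the fibres of $\Spec\RR(X)\setminus V\to X$ being single $\Hom(\Cl(X),\kk^*)$-orbits. If the proposition is read as asserting only the good quotient, your sketch is complete modulo the codimension-one fact above; if it is read as asserting a geometric quotient, that final step is missing from your argument.
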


The Cox ring appearing in the definition can be naively defined as the $\Cl(X)$-graded vector space
\[
\RR(X) = \bigoplus_{[D] \in \Cl(X)} H^0(X,\OO(D)),
\]
but as pointed out in \cite{Hu-Keel}, this definition does not admit a canonical ring structure. In the case when $\Cl(X)$ is free, after fixing a basis of $\Cl(X)$, the product can be defined by multiplying the sections as rational functions.

For an arbitrary finitely generated $\Cl(X)$, the definition of a ring structure is a bit more technical; we follow \cite[\S I.4]{Coxrings}. 
 
Choose a finite number of divisors $D_1,\ldots,D_m$, such that the classes $[D_1],\ldots,[D_m]$ generate $\Cl(X)$. Denote by $K$ the free subgroup of the group of Weil divisors  
$\WDiv(X)$ generated by these divisors. 
We hence have a surjection $K \onto \Cl(X)$. 
If we write $K^0$ for its kernel, then we obtain a short exact sequence  $K^0 \into K \onto \Cl(X)$ with $K,K^0$ free abelian groups. 
Finally, since the composition of $K^0 \into K \into \WDiv(X)$ and $\WDiv(X) \onto \Cl(X)$ is zero, the map $K^0 \into \WDiv(X)$ has to factor uniquely through $\kk(X)^*/\kk^*$, that is, the kernel of $\WDiv(X)\onto \Cl(X)$. So we obtain a map $K^0 \into \kk(X)^*/\kk^*$. As $K^0$ is a free abelian group, this map lifts (non-uniquely) to $\chi\colon K^0 \into \kk(X)^*$. In particular, for a principal divisor $E \in K^0$ we have $E = \div \chi(E)$. 
 
Consider the sheaf of $\Cl(X)$-graded rings $\mathcal{S} = \bigoplus_{D \in K} \OO_X(D)$ containing the homogeneous ideal $\mathcal{I} = \genby{1 - \chi(E) \mid E \in K^0}$. 
 
\begin{definition} 
The \emph{Cox ring} of $X$ is the $\Cl(X)$-graded ring 
\[ 
\RR(X) = H^0(X,\mathcal{S}) / H^0(X,\mathcal{I}). 
\]  
\end{definition}

The definition of the Cox ring depends on the choices we made above. But for different choices, the resulting rings will be (non-canonically) isomorphic.

We recall some terminology taken from \cite[\S 1.5.3]{Coxrings}.

\begin{definition}
Let $A$ be a finitely generated abelian group and $R$ be a ring graded by $A$. A non-zero, non-invertible homogeneous element $a\in A$ is called \emph{$A$-irreducible}, if $a=bc$ with $b,c \in A$ homogeneous implies that either $b$ or $c$ is invertible.

If the group $A$ is fixed, a $A$-irreducible element of $R$ will be called \emph{homoge\-neous-irreducible} or \emph{h-irreducible} for short. 

The ring $R$ is called \emph{graded factorial} with respect to $A$, or for short \emph{$A$-factorial}, if the factorisation of any homogeneous element into h-irreducible elements is unique up to reordering and multiplication with invertible homogeneous elements.
\end{definition}

\begin{proposition}[{\cite[Thm.~1.5.3.7]{Coxrings}}]
If $X$ is a Mori dream space, then $\RR(X)$ is graded factorial with respect to $\Cl(X)$.
\end{proposition}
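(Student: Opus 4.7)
The plan is to establish a bijection between nonzero homogeneous elements of $\RR(X)$, modulo the group of homogeneous units $\kk^*$, and effective Weil divisors on $X$, in such a way that multiplication of elements corresponds to addition of their associated divisors. Under this correspondence, h-irreducible elements match prime Weil divisors, and uniqueness of the prime decomposition of an effective divisor inside the free abelian group $\WDiv(X)$ yields graded factoriality of $\RR(X)$ over $\Cl(X)$.

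First I would associate to a nonzero homogeneous $f \in \RR(X)$ of degree $[D]$ an effective divisor as follows. Lift $f$ to some $\widetilde f \in H^0(X, \OO(D'))$ with $D' \in K$ and $[D'] = [D]$, and set $E_f = \div(\widetilde f) + D'$. To see that $E_f$ depends only on $f$, observe that two lifts of $f$ differ by an element of $H^0(X, \II)$, and for a principal $E \in K^0$ the identity $E = \div \chi(E)$ compensates the corresponding shift in $D'$. The hypothesis $H^0(X, \OO_X^*) = \kk^*$ then ensures that two homogeneous elements yield the same divisor precisely when they differ by a scalar, and that the homogeneous units of $\RR(X)$ are exactly $\kk^* \subset \RR(X)_0$ (any non-scalar nonzero homogeneous element has nonzero associated effective divisor).

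Next, from $\div(\widetilde f \, \widetilde g) = \div(\widetilde f) + \div(\widetilde g)$ I would read off $E_{fg} = E_f + E_g$; thus the bijection identifies the multiplicative monoid of nonzero homogeneous elements modulo $\kk^*$ with the additive monoid of effective Weil divisors. Consequently, $f$ is h-irreducible if and only if $E_f$ is prime: any decomposition $E_f = E_1 + E_2$ with both summands positive lifts to a non-trivial factorisation of $f$ into non-units, and conversely a prime divisor admits no non-trivial decomposition.

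Finally, two factorisations $f = \prod_i g_i = \prod_j h_j$ of a homogeneous element into h-irreducibles induce two decompositions of $E_f$ into prime divisors, which must coincide up to a permutation $\sigma$ since $\WDiv(X)$ is freely generated by primes. Matching factors with the same associated prime forces $g_i = \lambda_i h_{\sigma(i)}$ with $\lambda_i \in \kk^*$, so $\RR(X)$ is $\Cl(X)$-factorial. The main technical obstacle is the well-definedness of $E_f$: tracking the divisor through the quotient by $H^0(X,\II)$ requires that the ambiguity in the lift of the character $\chi \colon K^0 \to \kk(X)^*$ matches exactly the ambiguity in representing $f$ by a section of $\SS$, which is precisely what the generators $1 - \chi(E)$ of $\II$ encode. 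Note that $\QQ$-factoriality and finite generation of $\RR(X)$ enter only implicitly, via the standard construction of $\RR(X)$ as a well-defined $\Cl(X)$-graded ring; the unique factorization itself is a purely combinatorial reflection of the structure of $\WDiv(X)$.
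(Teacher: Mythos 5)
The paper does not prove this proposition itself but cites it from [Coxrings, Thm.~1.5.3.7], and your argument is essentially the proof given there: the correspondence $f \mapsto \div(\widetilde f) + D'$ identifies nonzero homogeneous elements modulo $\kk^*$ (using $H^0(X,\OO_X^*)=\kk^*$) with the free monoid of effective Weil divisors, matching h-irreducibles with primes. Your reasoning is correct, including the well-definedness of $E_f$ via the relation $E=\div\chi(E)$; the only step left implicit is the surjectivity of $f\mapsto E_f$ onto effective divisors (needed to lift a decomposition $E_f=E_1+E_2$ back to a factorisation of $f$), which follows immediately since $K$ surjects onto $\Cl(X)$.
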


\subsection*{From Mori dream spaces to stacks}

In the following, we give the definitions and basic properties of Mori dream stacks as introduced in \cite{HM}.
The difference here is that we focus solely on quotient stacks, and that we do not ask for smoothness as in the definition of general Mori dream stacks in [loc.~cit.], which is replaced by graded factoriality of the Cox ring. 

For stacks, we define $\RR(\XX) = \bigoplus_{L \in \Pic(\XX)} H^0(\XX,L)$, using $\Pic(\XX)$ instead of $\Cl(\XX)$. The product structure is similarly defined to the case of varieties, the details are spelt out in \cite[\S 2]{HM}.

\begin{definition}
\label{def:mdquotientstack}
Let $\XX$ be an algebraic stack separated and of finite type over $\kk$.
We call $\XX$ a \emph{Mori dream quotient stack} (or \emph{\MD-quotient stack} for short) if 
\begin{enumerate}
\item $\Pic(\XX)$ is finitely generated as a $\ZZ$-module, 
\item $\RR(\XX)$ is finitely generated as a $\kk$-algebra,
\item $\RR(\XX)$ is graded factorial and normal,
\item $H^0(\Spec \RR(\XX) \setminus V,\OO^*) = \kk^*$,
\item $\XX$ has the following representation as a quotient stack
\[
\XX = \stackquot{\Spec \RR(\XX) \setminus V}{\Hom(\Pic(\XX),\kk^*)},
\]
where the \emph{irrelevant locus} $V = V_{\XX}$ is the vanishing locus of the irrelevant ideal
\[
\Jirr(\XX) = ( f \in \RR(\XX) \text{ homogeneous } \mid \XX_f \text{ has an affine coarse moduli space}).
\]
\end{enumerate}
\end{definition}

\begin{remark}
\label{rem:KKV_sequence}
For an algebraic quotient stack $\XX = [Z/G]$ with $Z$ normal, we have the following exact sequence by \cite[\S 2]{Knop-Kraft-Vust} 
\[
\arraycolsep=0.5ex
\begin{array}{rcccccccc}
1 &\longrightarrow& H^0(\XX, \OO^*_{\XX}) &\longrightarrow& H^0(Z, \OO^*_Z) &\longrightarrow& \Hom(G, \kk^*) &\longrightarrow \\
&\longrightarrow& \Pic(\XX) &\longrightarrow& \Pic(Z) &\longrightarrow& \Pic(G)
\end{array}
\]
which implies that also $H^0(\XX,\OO_\XX^*) = \kk^*$ for an \MD-quotient stack,
using that $H^0(\Spec \RR(\XX) \setminus V,\OO^*)=\kk^*$.
\end{remark}

The different definition of the Cox ring using the Picard group is partly justified by the proposition below.

\begin{definition} \label{def.assstack}
Let $X$ be a Mori dream space. We call
\[
\Xass = \stackquot{\Spec \RR(X) \setminus V}{\Hom(\Cl(X),\kk^*)}
\]
the \emph{canonical \MD-stack} of $X$.
\end{definition}

Note there is $\pi\colon \Xass \to X$ making $X$ the coarse moduli space of $\Xass$.

\begin{remark}
Actually our canonical \MD-stack is \emph{a} canonical stack of $X$ as defined in the literature (see~\cite[\S 1.2]{Fantechi-etal}). 
If $\Xass$ is smooth, then it is \emph{the} smooth canonical stack of $X$ according to the definition of~\cite[Def.~4.4]{Fantechi-etal}.
\end{remark}

\begin{proposition}
\label{prop:Xass}
Let $X$ be a Mori dream space with $H^0(\Spec \RR(X) \setminus V, \OO^*)=\kk^*$ 
and $\Pic(\Spec \RR(X) \setminus V)=0$. 
Then, its canonical \MD-stack $\Xass$ is an \MD-quotient stack, i.e.\ $\RR(\Xass) = \RR(X)$ and $\Pic(\Xass) = \Cl(X)$, so
\[
\Xass = \stackquot{\Spec \RR(\Xass) \setminus V}{\Hom(\Pic(\Xass),\kk^*)}
\]
\end{proposition}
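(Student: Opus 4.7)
The plan is to verify the five conditions of Definition~\ref{def:mdquotientstack} for the canonical stack $\Xass = [Z/G]$, where $Z = \Spec \RR(X) \setminus V$ and $G = \Hom(\Cl(X),\kk^*)$. The two central computations are $\Pic(\Xass) = \Cl(X)$ and $\RR(\Xass) = \RR(X)$; once these are in place, the remaining conditions transfer from the Mori dream space structure on $X$.

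For the Picard group, I apply the Knop--Kraft--Vust sequence of Remark~\ref{rem:KKV_sequence} to $\Xass$. The hypotheses provide $H^0(Z,\OO_Z^*) = \kk^*$ and $\Pic(Z) = 0$, and Cartier duality canonically identifies $\Hom(G,\kk^*)$ with $\Cl(X)$. The map $H^0(Z,\OO_Z^*) \to \Hom(G,\kk^*)$ sends a semi-invariant unit to its weight; constants carry the trivial weight, so this map vanishes. The sequence therefore collapses to
\[
1 \to H^0(\Xass,\OO_\Xass^*) \to \kk^* \to 0 \to \Cl(X) \to \Pic(\Xass) \to 0,
\]
yielding simultaneously $H^0(\Xass,\OO_\Xass^*) = \kk^*$ and the isomorphism $\Cl(X) \isom \Pic(\Xass)$.

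For the Cox ring, I exploit the standard correspondence between line bundles on $[Z/G]$ and $G$-linearised line bundles on $Z$. Since $\Pic(Z) = 0$, every such line bundle is $\OO_Z$ equipped with a $G$-action through a character $D \in \Cl(X) = \Hom(G,\kk^*)$, and $H^0(\Xass,L_D)$ is the $D$-eigenspace of $H^0(Z,\OO_Z)$. As $G$ is diagonalisable, $H^0(Z,\OO_Z)$ decomposes fully into its weight spaces, so summing over $D \in \Cl(X)$ produces $\RR(\Xass) = H^0(Z,\OO_Z)$ as $\Cl(X)$-graded rings. By Hartogs' extension $H^0(Z,\OO_Z) = \RR(X)$, using the normality of $\RR(X)$ and the fact that the irrelevant locus $V$ has codimension at least two in $\Spec \RR(X)$.

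The remaining conditions of Definition~\ref{def:mdquotientstack} now follow directly: finite generation, graded factoriality and normality of $\RR(\Xass)$ transfer from the corresponding known properties of $\RR(X)$ for a Mori dream space; the second hypothesis supplies condition~(iv) after the identification $\RR(\Xass) = \RR(X)$; and the irrelevant loci agree because the coarse moduli space of $\Xass_f$ is $X_f$, so $X_f$ is affine if and only if $\Xass_f$ has an affine coarse moduli space. The most delicate step I expect is justifying the Hartogs extension, namely the codimension bound on $V$ inside $\Spec \RR(X)$; this is standard for Mori dream spaces but will need a careful citation (or a brief direct verification using that $X$ is covered by the open subsets $X_f$ for $f \in \Jirr(X)$ homogeneous).
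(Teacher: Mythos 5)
Your proposal is correct, and the two halves of it relate to the paper's proof differently. The computation of $\Pic(\Xass)=\Cl(X)$ via the Knop--Kraft--Vust sequence is exactly the paper's argument (the paper phrases the vanishing of $H^0(Z,\OO_Z^*)\to\Hom(G,\kk^*)$ as ``the first arrow is an isomorphism'', which is the same observation as your ``constants carry trivial weight''). For the identification $\RR(\Xass)=\RR(X)$, however, you take a genuinely different route: you work upstairs on the characteristic space $Z=\Spec\RR(X)\setminus V$, identify line bundles on $[Z/G]$ with characters of $G$ (using $\Pic(Z)=0$), decompose $H^0(Z,\OO_Z)$ into weight spaces, and then extend across $V$ by Hartogs using normality of $\RR(X)$ and $\codim V\geq 2$. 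The paper instead works downstairs: it uses $\OO_X\cong\pi_*\OO_{\Xass}$ for the coarse moduli map $\pi$ and the projection formula to get $H^0(\Xass,\OO(D))=H^0(X,\OO(D))$ divisor by divisor. Your route has the advantage of making the ring structure on $\RR(\Xass)$ transparent (it is literally the weight grading of $H^0(Z,\OO_Z)$) and of making explicit where the hypothesis $\Pic(Z)=0$ enters the Cox-ring computation, at the cost of having to justify the codimension bound on $V$ -- which is indeed standard for Mori dream spaces and is the same fact the paper invokes in \autoref{rem:complete}. The paper's route is shorter but leaves the compatibility of products implicit. You also verify the remaining conditions of \autoref{def:mdquotientstack} (in particular that the irrelevant ideals match, via the coarse moduli space of $\Xass_f$ being $X_f$), which the paper's proof leaves tacit; this is a harmless and arguably welcome addition.
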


\begin{proof}
First we check that $\Pic(\Xass) = \Cl(X)$.
Consider the sequence from \autoref{rem:KKV_sequence} for the quotient stack $\Xass$. 
The assumption $H^0(\Spec \RR(X) \setminus V, \OO^*)=\kk^*$ implies that the first arrow in the sequence is an isomorphism. Now $\Pic(\Spec \RR(X) \setminus V)=0$ implies the desired equality.

Finally, we have to prove that $\RR(X) = \RR(\Xass)$. Since $\pi\colon \Xass \to X$ is the map to the coarse moduli space, there is a natural isomorphism $\OO_X \cong \pi_* \OO_{\Xass}$. Using the projection formula, we get $H^0(\Xass,\OO(D)) = H^0(X,\OO(D))$ for any divisor.

By \cite[Thm.~1.5.1.1 \& Prop.~1.5.3.2]{Coxrings}, the Cox ring of any Mori dream space is graded factorial and normal, therefore $\RR(\Xass)$ as well.
\end{proof}

\begin{remark}
\label{rem:complete}
The technical assumption $H^0(\Spec \RR(X) \setminus V, \OO^*)=\kk^*$ is fulfilled for example if $H^0(X,\OO)=\kk$, by \cite[Prop~1.5.2.5]{Coxrings}.
Especially it holds as soon as $X$ is complete.

The assumption $\Pic(\Spec \RR(X) \setminus V)=0$ holds whenever $\RR(X)$ is factorial. Indeed, factoriality implies $\Cl(\Spec \RR(X)) = 0$ and, since the codimension of the irrelevant locus is at least 2, $\Cl(\Spec \RR(X) \setminus V) = \Cl(\Spec \RR(X))= 0$, hence also the Picard group vanishes.
\end{remark}

We recall the definition of two root constructions for algebraic stacks as found in \cite{Cadman} and 
\cite{Abramovich-Graber-Vistoli} and we describe them more explicitly in the case of \MD-quotient stacks.

\begin{definition}
Let $\XX$ be an algebraic stack and $D$ an effective divisor on $\XX$.
Let $n$ be a positive integer.
Consider the fibre product
\[
\xymatrix@R=3ex@C=1em{
\XX \times_{[\AA^1/\kk^*]} [\AA^1/\kk^*] \ar[r] \ar[d] & [\AA^1/\kk^*] \ar[d]^{n}\\
\XX \ar[r]_{D} & [\AA^1/\kk^*]
}
\]
where the lower map corresponds to the divisor and the  right map is induced by $p \mapsto p^n$.
The stack $\sqrt[n]{D/\XX} \coloneqq \XX \times_{[\AA^1/\kk^*]} [\AA^1/\kk^*]$
is the \emph{$n$-th root of the divisor $D$ on $\XX$}.
\end{definition}

In the following definition, we use the shorthand $\BB G$ to denote the quotient stack $[\pt/G]$, where $G$ can be an arbitrary group.

\begin{definition}
Let $\XX$ be an algebraic stack and $L$ a line bundle on $\XX$.
Let $n$ be a positive integer.
Consider the fibre product
\[
\xymatrix@R=3ex@C=1em{
\XX \times_{\BB \kk^*} \BB \kk^* \ar[r] \ar[d] & \BB \kk^* \ar[d]^{n}\\
\XX \ar[r]_{L} & \BB \kk^*
}
\]
where the lower map corresponds to the line bundle and the right map is induced by $p \mapsto p^n$.
The stack $\sqrt[n]{L/\XX} \coloneqq \XX \times_{\BB \kk^*} \BB \kk^*$ is the \emph{$n$-th root of the line bundle $L$ on $\XX$}.
\end{definition}

\begin{definition}
Let $\XX$ be an algebraic stack. We say that $\XX'$ is \emph{obtained by roots} from
$\XX$, if $\XX'$ is the result of performing a finite sequence of root constructions
with divisors or line bundles starting from $\XX$.
\end{definition}

\begin{proposition}
\label{prop:roots-preserve-MD}
Let $\XX$ be an \MD-quotient stack and $D$ a prime divisor on $\XX$ corresponding to a line bundle $L$ and a section $s \in \RR(\XX)$.
Let $n\in\NN$.
\begin{enumerate}
 \item The stack $\XX' = \sqrt[n]{D/\XX}$ is an \MD-quotient stack 
where $\RR(\XX')$ and $\Pic(\XX')$ fit into the pushout diagrams 
\[
\xymatrix@R=1.25ex@C=1.25em{
\kk[z] \ar[r]^{z \mapsto s} \ar[dd]^{z \mapsto z^n} & \RR(\XX) \ar[dd] &&
 \ZZ \ar[r]^{1 \mapsto [D]} \ar[dd]^{1\mapsto n} & \Pic(\XX) \ar[dd]\\ \\
\kk[z] \ar[r]_-{z \mapsto z}                        & \RR(\XX') \ar@{}[d]|-{\rotatebox{-90}{$\displaystyle\coloneqq$}} &&
 \ZZ \ar[r]                                     & \Pic(\XX') \ar@{}[d]|-{\rotatebox{-90}{$\displaystyle\coloneqq$}} \\
                                                    & \quot{\RR(\XX)[z]}{(z^n-s)} &&
                                                    & \quot{\Pic(\XX) \oplus \ZZ}{\ZZ \cdot (\OO(D),-n)}
}
\]
and $\RR(\XX')$ is $\Pic(\XX')$-graded by setting $\deg(z) = (0,1)$.
\item The stack  $\XX' = \sqrt[n]{L/\XX}$ is an \MD-quotient stack 
where $\RR(\XX')=\RR(\XX)$ and $\Pic(\XX')$ fits in the right diagram of (1). 
The grading of $\RR(\XX')$ is induced by $\Pic(\XX) \into \Pic(\XX')$.
\end{enumerate}
\end{proposition}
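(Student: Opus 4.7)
The plan is to compute each root construction by unwinding the definition of the fiber product, using that all the stacks involved are given as explicit quotient stacks. Recall that $\XX = [U/G]$ with $U = \Spec\RR(\XX) \setminus V$ and $G = \Hom(\Pic(\XX),\kk^*)$, while $[\AA^1/\kk^*]$ and $\BB\kk^*$ are quotient stacks in the obvious way. Since the fiber product of quotient stacks by compatible group actions is again a quotient stack, I expect to read off the Cox ring and Picard group directly from the resulting presentation.

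For (1), the map $\XX \to [\AA^1/\kk^*]$ classifying the pair $(L,s)$ lifts to the $G$-equivariant morphism $U \to \AA^1$, $u \mapsto s(u)$, where $G$ acts on $\AA^1$ through the character $\chi_L \colon G \to \kk^*$ associated to $L \in \Pic(\XX)$. The $n$-th power map lifts to $\AA^1 \xrightarrow{z \mapsto z^n} \AA^1$, compatibly with $\kk^* \xrightarrow{\lambda \mapsto \lambda^n} \kk^*$. Taking the honest fiber product on atlases gives
\[
U \times_{\AA^1} \AA^1 = \Spec\bigl(\RR(\XX)[z]/(z^n - s)\bigr) \setminus V',
\]
acted on by $G' = G \times_{\kk^*} \kk^* = \{(g,\lambda) : \chi_L(g) = \lambda^n\}$. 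Pontryagin-dually, $G' = \Hom(\Pic(\XX'), \kk^*)$ for $\Pic(\XX') = (\Pic(\XX)\oplus \ZZ)/\ZZ\cdot(L,-n)$, which is the pushout claimed; since $(g,\lambda)$ acts on $z$ by multiplication by $\lambda$, the variable $z$ sits in degree $(0,1)$. Case (2) is analogous but easier: the map to $\BB\kk^*$ classifying $L$ lifts to $U \to \pt$ with $G$ acting through $\chi_L$, and the fiber product on atlases leaves $U$ unchanged, so $\RR(\XX') = \RR(\XX)$, whereas $G'$ and hence $\Pic(\XX')$ are the same as in (1).

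It then remains to verify the five axioms of \autoref{def:mdquotientstack} for $\XX'$. Finite generation of $\Pic(\XX')$ is immediate from the pushout, and finite generation of $\RR(\XX')$ is clear since we adjoin at most one generator satisfying an integral relation. The quotient presentation is exactly what the fiber product computation produced, and the description of the irrelevant locus $V'$ follows from the fact that the atlas map $U' \to U$ is affine, so affineness of coarse moduli of $\XX'_f$ pulls back from that of $\XX_{f'}$ under the obvious dictionary between h-irreducible elements. The condition $H^0(U', \OO^*) = \kk^*$ is verified by feeding $\XX'$ into the Knop--Kraft--Vust sequence in \autoref{rem:KKV_sequence} and comparing with the same sequence for $\XX$. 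The main obstacle is axiom (3): the new Cox ring $\RR(\XX)[z]/(z^n-s)$ must be shown to be normal and graded factorial with respect to $\Pic(\XX')$. For normality, I would use that $D$ being prime forces $s$ to be h-irreducible in the graded factorial ring $\RR(\XX)$ and invoke the standard Kummer-extension criterion (essentially Eisenstein's criterion applied to $z^n - s$). For graded factoriality, the upshot is that $z$ replaces $s$ as an h-irreducible generator: any h-irreducible factor of $\RR(\XX)$ different from $s$ remains h-irreducible in $\RR(\XX')$ for the refined grading, and $z$ is h-irreducible by minimality of its degree $(0,1)$. Case (2) needs no ring-level work — only that refining the grading preserves graded factoriality and normality, which is formal.
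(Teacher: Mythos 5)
Your overall architecture matches the paper's: compute the fibre product on atlases to obtain the quotient presentation of $\XX'$ and read off the pushout diagrams (this is what the paper imports from \cite[Thm.~2.9, Cor.~2.11, Thm.~2.12]{HM}), handle the irrelevant locus by comparing coarse moduli spaces of basic open substacks (the paper's \autoref{lem:algebra-map}), and isolate normality and graded factoriality of $\RR(\XX)[z]/(z^n-s)$ as the remaining substance. You correctly identify these last two as the crux, but both of your sketches there have genuine gaps. For normality, Eisenstein's criterion (or any irreducibility statement for the polynomial $z^n-s$) only shows that $\RR(\XX')$ is a domain; it says nothing about integral closedness, and there is no off-the-shelf ``Kummer criterion'' to invoke. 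The paper's \autoref{lem:cover-normal} proves normality via Serre's criterion: $R_1$ holds because away from $\sing(U)\cup\sing(D)$, a locus of codimension at least two, the cyclic cover is smooth (here primality, i.e.\ reducedness and irreducibility, of $D$ is used), and $S_2$ holds because $A'=\bigoplus_{i=0}^{n-1}A\cdot z^i$ is a direct sum of modules saturated in codimension two.

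For graded factoriality, describing which elements of $\RR(\XX')$ are h-irreducible does not by itself yield uniqueness of homogeneous factorisations; that implication is exactly what has to be proved. The paper's \autoref{lem:graded_ufd} supplies the argument: given two homogeneous factorisations $t_1t_2=t_3t_4$ in $R'$, write $t_i=z^{m_i}t_i'$ with $t_i'\in R$ not divisible by $s$ and $m_i$ maximal, cancel powers of $z$, and reduce to two homogeneous factorisations $s^l t_1't_2'=t_3't_4'$ in $R$, which contradicts $A$-irreducibility of $s$. Note that the lemma is an ``if and only if'', so primality of $D$ is essential and your observation that $s$ is then h-irreducible is the right starting point, but the reduction step is missing from your write-up. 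A smaller quibble: extracting $H^0(\Spec\RR(\XX')\setminus V',\OO^*)=\kk^*$ from the Knop--Kraft--Vust sequence runs in the wrong direction (the sequence computes invariants of the stack from those of the atlas, not conversely); one should instead argue directly on the finite cover, as is done in \cite{HM}.
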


\begin{remark}
\label{rem:space_to_stack}
If $X$ is a Mori dream space with $H^0(\Spec \RR(X) \setminus V,\OO^*)=\kk^*$, then $\RR(X)= \RR(\Xass)$ is graded factorial.
As a consequence, if $\Xcan$ is an \MD-quotient stack, then any stack built from it by a sequence of roots of prime divisors or line bundles is again an \MD-quotient stack.
\end{remark}

\begin{proof}[Proof of \autoref{prop:roots-preserve-MD}]
The proof carries over from \cite[Thm.~2.9, Cor.~2.11 and Thm.~2.12]{HM}. No smoothness is needed and the irrelevant locus is defined differently here, for the treatment of that see \autoref{lem:algebra-map}.
The normality of $\RR(\XX')$ follows from the subsequent \autoref{lem:cover-normal}, applied to $U = \Spec \RR(\XX)$ and $U' = \Spec \RR(\XX')$.
Now we are left to show that $\RR(\XX')$ is still graded factorial in case (1), which is \autoref{lem:graded_ufd} below.
\end{proof}

\begin{lemma}
\label{lem:algebra-map}
Let $\XX_i = \stackquot{\Spec R_i \setminus V(J_i)}{\Hom(G_i,\kk^*)}$ with $i=1,2$ be two quotient stacks with 
\begin{itemize}
\item $R_i$ graded factorial with respect to $G_i$;
\item $J_i$ the corresponding irrelevant ideal;
\item $G_1 \into G_2$ of finite index such that $R_1 = \bigoplus_{g \in G_1} (R_2)_g$.
\end{itemize}
Then the graded morphism $R_1 \into R_2$ induces a surjective map $\XX_2 \onto \XX_1$.
\end{lemma}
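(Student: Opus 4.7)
The plan is to realise $R_1 \into R_2$ as a finite quotient of affine schemes by the Cartier dual $\mu$ of $G_2/G_1$, and then descend the resulting equivariant map to a morphism of quotient stacks via the universal property.

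First, I would apply $\Hom(\blank,\kk^*)$ to the short exact sequence $0 \to G_1 \to G_2 \to G_2/G_1 \to 0$ of finitely generated abelian groups. Since $G_2/G_1$ is finite and $\kk^*$ is divisible (as $\kk$ is algebraically closed of characteristic $0$), the corresponding $\Ext^1$ vanishes, giving a short exact sequence
\[
 1 \to \mu \to \Hom(G_2,\kk^*) \to \Hom(G_1,\kk^*) \to 1,
\]
where $\mu \coloneqq \Hom(G_2/G_1,\kk^*)$ is a finite (reductive) group. Under $\mu \into \Hom(G_2,\kk^*)$, the group $\mu$ acts on $R_2$ fixing exactly the summands in degrees $g \in G_1$, so by hypothesis $R_2^{\mu} = R_1$. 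Hence $\pi\colon \Spec R_2 \to \Spec R_1$ is the finite surjective good quotient by $\mu$, and it is equivariant for $\Hom(G_2,\kk^*) \onto \Hom(G_1,\kk^*)$.

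Next, the equivariant pair produces a morphism $[\Spec R_2/\Hom(G_2,\kk^*)] \to [\Spec R_1/\Hom(G_1,\kk^*)]$ by the universal property of quotient stacks. To restrict this to a morphism $\Phi\colon\XX_2 \to \XX_1$ and then argue surjectivity, I would establish the equality $\pi^{-1}(V(J_1)) = V(J_2)$. For the inclusion $\pi^{-1}(V(J_1)) \subseteq V(J_2)$, take a homogeneous generator $g \in J_2$: the power $g^m$ with $m = [G_2{:}G_1]$ lies in $R_1$, and the affineness of the coarse moduli of $\XX_{2,g}$ descends along the finite $\mu$-quotient to affineness of the coarse moduli of $\XX_{1,g^m}$ (since the quotient of an affine scheme by a finite reductive group is affine), yielding $g^m \in J_1$ and so $g \in \sqrt{J_1 R_2}$. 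The reverse inclusion $V(J_2) \subseteq \pi^{-1}(V(J_1))$ follows by pulling back along the now-established finite morphism of coarse moduli: for a generator $f \in J_1$, $\XX_{2,f}$ is affine as a finite cover of the affine $\XX_{1,f}$, so $f \in J_2$. With this equality in hand, surjectivity of $\Phi$ on geometric points is immediate: $\pi$ is surjective on the whole spectra, its restriction to $\Spec R_2 \setminus V(J_2) \to \Spec R_1 \setminus V(J_1)$ is therefore surjective, and $\Hom(G_2,\kk^*) \onto \Hom(G_1,\kk^*)$ is surjective by construction.

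The main obstacle is the verification of the irrelevant-loci compatibility. The definition of $\Jirr$ via an affineness condition on coarse moduli spaces does not transport across the inclusion $R_1 \into R_2$ in a purely formal way; the argument crucially uses the finiteness of $\mu$, the reductivity/affineness of quotients by finite diagonalisable groups, and the hypothesis that the $R_i$ are graded factorial to transfer the defining condition between $\XX_1$ and $\XX_2$.
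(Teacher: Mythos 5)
Your proposal is correct, and its overall strategy coincides with the paper's: the only substantive point is the identity $\pi^{-1}(V(J_1)) = V(J_2)$, which both arguments establish by comparing the coarse moduli spaces of the basic open substacks $(\XX_2)_{g}$ and $(\XX_1)_{g^m}$. Where you differ is in how that comparison is carried out. The paper works directly with fractions: any element $h/r_2^{l}$ of $(R_2[r_2^{-1}])^{\Hom(G_2,\kk^*)}$ is rewritten, after multiplying numerator and denominator by a suitable homogeneous $q$, as an element of $(R_1[r_1^{-1}])^{\Hom(G_1,\kk^*)}$ with $r_1 = r_2^{l}q$. You instead take invariants in stages through the sequence $1 \to \mu \to \Hom(G_2,\kk^*) \to \Hom(G_1,\kk^*) \to 1$ and appeal to generalities about finite quotients; this buys a cleaner conceptual picture (and makes the surjectivity of $\Spec R_2 \to \Spec R_1$ transparent via integrality), at the cost of a slight imprecision: since $\mu \subset \Hom(G_2,\kk^*)$ and $(R_2[g^{-1}])^{\mu} = R_1[g^{-m}]$, the two invariant rings actually coincide, so the induced maps on coarse moduli are isomorphisms rather than genuine finite $\mu$-quotients or finite covers; as finite quotients and finite covers of affine schemes are affine anyway, this does not affect the conclusion. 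One small correction to your closing remark: neither your argument nor the paper's actually uses graded factoriality of the $R_i$ here; the finite-index hypothesis and the definition of the irrelevant ideals are what carry the proof.
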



\begin{proof}
For $i=1,2$ let $\tilde\XX_i = [\Spec R_i / \Hom(G_i, \kk^*)]$ and $\pi\colon \tilde\XX_2 \to \tilde\XX_1$ be the canonical (surjective) map. We only need to prove that $\pi^{-1}(V(J_1)) = V(J_2)$.

First, recall that the coarse moduli space of a stack of the form $[\Spec S/T]$ is $\Spec S^T$. 
We claim that, if $r_2 \in J_2 \subset R_2$ is $G_2$-homogeneous, then there exists a $G_1$-homogeneous $r_1  \in R_1$, such that the coarse moduli spaces of $(\tilde\XX_2)_{r_2} $ and of $(\tilde\XX_1)_{r_1}$ coincide. The claim easily implies the inclusion $\pi^{-1}(V(J_1)) \subset V(J_2)$.
As a consequence, $\XX_1$ and $\XX_2$ have the same coarse moduli space $X$.

The coarse moduli space of $(\tilde\XX_2)_{r_2}$ is $\Spec (R_2[r_2^{-1}])^{G_2}$.
Any element in this ring can be written as $p=\frac{h}{r_2^l}$, where both $h$ and $r_2^l$ have the same degree in $G_2$.
As $G_1 \subset G_2$ has finite index, there is a homogeneous $q \in R_2$, such that the degrees of $hq$ and $r_2^lq$ are in $G_1$, and therefore these elements lie in $R_1$.
So if we consider $r_1 = r_2^lq$, then $p = \frac{hq}{r_1} \in (R_1[r_1^{-1}])^{G_1}$. 
Hence the coarse moduli spaces of $(\tilde\XX_2)_{r_2} $ and of $(\tilde\XX_1)_{r_1}$ coincide as claimed.

As the map $\XX_2 \to X$ to the coarse moduli space factors through $\pi\colon\XX_2 \onto \XX_1$, the converse inclusion $V(J_2) \subset \pi^{-1}(V(J_1))$ is easy to see.
\end{proof}

\begin{lemma}
\label{lem:cover-normal}
Let $\pi\colon U' \to U$ be a cyclic cover branched at a (possibly singular) prime divisor $D$. If $U$ is normal, so is $U'$.
\end{lemma}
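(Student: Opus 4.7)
The plan is to apply Serre's criterion: $U'$ is normal if and only if it is $R_1$ and $S_2$. Since normality is a local property, I would first reduce to the affine case $U = \Spec A$ with $A$ a normal Noetherian domain, where the cover trivialises to $U' = \Spec A'$ with $A' = A[z]/(z^n - s)$; here $s \in A$ is a local equation of the branch divisor $D$ and $n$ is the degree of the cover. The extension $A \into A'$ is then free of rank $n$ on the basis $1, z, \dots, z^{n-1}$, hence flat, with Artinian fibres $\kappa(\mathfrak{p})[z]/(z^n - \bar s)$ over every $\mathfrak{p} \in \Spec A$. Before proceeding, I would show that $A'$ is a domain by Eisenstein's criterion applied at the generic point of $D$, where $A$ localises to a DVR (by $R_1$ of $A$) and $s$ is a uniformiser.

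For $S_2$, I would exploit flatness. For any $\mathfrak{q} \in \Spec A'$ lying over $\mathfrak{p} \in \Spec A$, the depth and dimension formulas for the flat local morphism $A_\mathfrak{p} \to A'_\mathfrak{q}$ give
\[
\mathrm{depth}(A'_\mathfrak{q}) = \mathrm{depth}(A_\mathfrak{p}) + \mathrm{depth}(A'_\mathfrak{q}/\mathfrak{p}A'_\mathfrak{q}), \quad \dim A'_\mathfrak{q} = \dim A_\mathfrak{p} + \dim A'_\mathfrak{q}/\mathfrak{p}A'_\mathfrak{q};
\]
the Artinian fibre contributes zero on both sides, so $\mathrm{depth}(A'_\mathfrak{q}) = \mathrm{depth}(A_\mathfrak{p}) \geq \min(\dim A_\mathfrak{p}, 2) = \min(\dim A'_\mathfrak{q}, 2)$, using that the normal ring $A$ is $S_2$.

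For $R_1$, I would check height-one primes $\mathfrak{q} \subset A'$ case by case. Since $A'$ is a domain, $\mathfrak{p} \coloneqq \mathfrak{q} \cap A$ is nonzero and of height one, so $A_\mathfrak{p}$ is a DVR. If $s \notin \mathfrak{p}$, then $A_\mathfrak{p} \to A'_\mathfrak{q}$ is étale (characteristic zero, and $nz^{n-1}$ is a unit), hence $A'_\mathfrak{q}$ is regular. If $\mathfrak{p}$ is the generic point of $D$, then $s$ is a uniformiser of $A_\mathfrak{p}$, so $A'_\mathfrak{p} = A_\mathfrak{p}[z]/(z^n - s)$ is a totally ramified cyclic extension of the DVR $A_\mathfrak{p}$, hence itself a DVR, and already local, whence $A'_\mathfrak{q} = A'_\mathfrak{p}$ is regular.

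The main conceptual obstacle is that $D$ is allowed to be singular; however, its singular locus has codimension $\geq 2$ in $U$, since the generic point of $D$ is automatically a regular point of $U$ by $R_1$ of $A$. Consequently the possible singularities of $D$ never enter the $R_1$ check above, and are absorbed by $S_2$ in higher codimension.
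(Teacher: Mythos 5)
Your proof is correct, and it follows the same skeleton as the paper's (Serre's criterion after reducing to the affine picture $A' = A[z]/(z^n-s)$, free of rank $n$ over $A$), but the two halves are executed differently. For $S_2$, the paper observes directly that $A' = \bigoplus_{i=0}^{n-1} A\cdot z^i$ is a direct sum of copies of the $S_2$-module $A$ (following \cite{Alexeev-Pardini}); your flat-local-homomorphism depth formula with Artinian fibres is a heavier-machinery packaging of the same underlying freeness, and both work. For $R_1$ the routes genuinely diverge: the paper deletes $\sing(U)\cup\sing(D)$, which has codimension $\geq 2$, and invokes the smoothness of a root construction along a smooth divisor in a smooth ambient space \cite[Cor.~1.23]{HM}, whereas you check the two kinds of height-one primes of $A'$ by hand --- \'etale descent of regularity where $s$ is a unit, and the Eisenstein/totally-ramified-DVR argument at the generic point of $D$. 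Your version is more self-contained (no citation needed, and it also establishes explicitly that $A'$ is a domain, which the paper leaves implicit), at the cost of a slightly longer case analysis; the paper's version is shorter but leans on its earlier smoothness result. Your closing remark correctly identifies why the possible singularities of $D$ are harmless: they sit in codimension $\geq 2$ and therefore never appear among the height-one primes, exactly the point the paper handles by passing to $\tilde U$.
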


\begin{proof}
We show the normality using Serre's criterion.

\step{R1} Consider $\tilde U = U_{\sm} \setminus \sing(D)$, whose complement is of at least codimension two in $U$, as $U$ is normal. Then the restriction $\pi^{-1}(\tilde U) \to \tilde U$ is again a cyclic cover and as $D \cap \tilde U$ is smooth, therefore $\pi^{-1}(\tilde U)$ is smooth by \cite[Cor~1.23]{HM}. 

\step{S2} 
We follow here an argument from the proof of \cite[Lem.~1.2]{Alexeev-Pardini}, which we repeat for the convenience of the reader. 

Without loss of generality, we may assume that $U = \Spec A$ and $D$ given by a prime $p \in A$. Then $U' = \Spec A'$ with $A' = A[z]/(z^n-p)$ for some $n\in\NN$.
 So $A' = \bigoplus_{i=0}^{n-1} A\cdot z^i$ as an $A$-module. As each summand $A \cdot z^i$ is saturated in codimension $2$, $U' = \Spec_{\OO_U} \OO_{U'}$ is an $S_2$-variety.
\end{proof}

\begin{lemma}
\label{lem:graded_ufd}
Let $\XX$ be an \MD-quotient stack, in particular its Cox ring $\RR(\XX)$ is $\Pic(\XX)$-factorial.
Let $s \in \RR(\XX)$ and $D$ be the corresponding divisor. 
Then for $\XX' = \sqrt[n]{D/\XX}$, the ring $\RR(\XX')$ is $\Pic(\XX')$-factorial if and only if $D$ is a prime divisor.
\end{lemma}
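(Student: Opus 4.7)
The plan is to exploit the explicit description $\RR(\XX') = \RR(\XX)[z]/(z^n - s)$ from \autoref{prop:roots-preserve-MD} together with the presentation $\Pic(\XX') = (\Pic(\XX) \oplus \ZZ)/\ZZ\cdot([D],-n)$, which make $\RR(\XX')$ a free $\RR(\XX)$-module of rank $n$ with basis $1,z,\ldots,z^{n-1}$.

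First I would establish a normal form for $\Pic(\XX')$-homogeneous elements. Since the cokernel of the inclusion $\Pic(\XX) \hookrightarrow \Pic(\XX')$ is cyclic of order $n$, generated by the class of $z$, every $\Pic(\XX')$-homogeneous element of $\RR(\XX')$ is uniquely of the form $f \cdot z^j$ with $f \in \RR(\XX)$ a $\Pic(\XX)$-homogeneous element and $0 \le j < n$. An immediate consequence is that the homogeneous units of $\RR(\XX')$ are still $\kk^*$: the degree-zero condition on a product $fz^j \cdot gz^{j'}$ forces $j + j' \in \{0, n\}$, and the case $j + j' = n$ would require $s$ to be a unit in $\RR(\XX)$, contradicting the nontriviality of $D$.

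The crux of the argument is the classification of h-irreducibles in $\RR(\XX')$. For $j > 0$, the factorisation $fz^j = z \cdot (fz^{j-1})$ shows that $fz^j$ is h-irreducible only when it is associated to $z$. For $j = 0$, I would prove that an h-irreducible $f \in \RR(\XX)$ remains h-irreducible in $\RR(\XX')$ if and only if $s \nmid f$ in $\RR(\XX)$: a putative non-trivial factorisation $f = (f_0 z^a)(g_0 z^b)$ with $0 \le a,b < n$ must satisfy $a + b \in \{0, n\}$, the case $a+b=0$ is excluded by h-irreducibility of $f$ in $\RR(\XX)$, while $a+b = n$ yields $f = f_0 g_0 s$, hence $s \mid f$. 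I expect this classification to be the main technical obstacle; everything downstream is degree bookkeeping.

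The forward direction then follows: if $D$ is prime, $s$ is h-irreducible, so the h-irreducibles of $\RR(\XX')$ are exactly $z$ together with the h-irreducibles of $\RR(\XX)$ not associated to $s$. Given any $fz^j$, write $f = u \cdot s^a \cdot \prod t_i^{e_i}$ in $\RR(\XX)$ (with $u \in \kk^*$ and $t_i \not\sim s$); using $s = z^n$ this becomes $fz^j = u \cdot z^{na+j} \prod t_i^{e_i}$, and uniqueness of this factorisation follows from the normal form together with the $\Pic(\XX)$-factoriality of $\RR(\XX)$. Conversely, if $D$ is not prime then $s = u\prod_{i=1}^k s_i^{e_i}$ with $\sum e_i \ge 2$ and every $s_i \not\sim s$ (covering both the reducible and the non-reduced cases); by the classification each $s_i$ stays h-irreducible in $\RR(\XX')$, while a degree comparison in $\Pic(\XX')$ (the relation $(0,1) = ([D_i],0) + k([D],-n)$ would force $-kn = 1$) shows that $z$ is not associated to any $s_i$. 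Then $z^n = u\prod s_i^{e_i}$ exhibits two genuinely inequivalent h-irreducible factorisations, contradicting $\Pic(\XX')$-factoriality.
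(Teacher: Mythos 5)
Your proof is correct. It rests on the same computational core as the paper's argument --- the normal form $fz^j$ (with $f \in \RR(\XX)$ homogeneous and $0 \le j < n$) for $\Pic(\XX')$-homogeneous elements, and the relation $z^n = s$ --- but it is organised quite differently. The paper proves both implications by contraposition: for ``$D$ not prime $\Rightarrow$ not factorial'' it simply exhibits $z^n = s_1 s_2$, and for ``not factorial $\Rightarrow$ $D$ not prime'' it takes two distinct homogeneous factorisations $t_1 t_2 = t_3 t_4$, extracts the maximal power of $z$ from each $t_i$, and cancels down to two factorisations in $\RR(\XX)$. You instead classify the h-irreducibles of $\RR(\XX')$ (associates of $z$, plus the h-irreducibles of $\RR(\XX)$ not divisible by $s$) and, when $D$ is prime, construct the factorisation of an arbitrary $fz^j$ and check its uniqueness directly from the normal form and the $\Pic(\XX)$-factoriality of $\RR(\XX)$. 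What your route buys is precision at the points where the paper is terse: you verify that the homogeneous units of $\RR(\XX')$ are those of $\RR(\XX)$, and in the ``only if'' direction you check via the degree computation in $\Pic(\XX')$ that $z$ is not associated to any $s_i$, so that $z^n = u\prod s_i^{e_i}$ really is a second, inequivalent h-irreducible factorisation --- a point the paper leaves implicit. Two minor remarks: in the uniqueness step it is worth saying explicitly that, by your classification, an arbitrary h-irreducible factorisation of $fz^j$ has the form $u\,z^N\prod t_i^{e_i}$ with $s \nmid t_i$, so that writing $N = qn + r$ and invoking the normal form pins down $r = j$ and reduces everything to unique factorisation of $f$ in $\RR(\XX)$; and both your argument and the paper's tacitly assume $n \ge 2$ (for $n = 1$ the root construction is trivial and the ``only if'' direction fails), which is harmless since the lemma is only invoked with $n = p$ a prime.
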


\begin{proof}
First note that $D$ is a prime divisor if and only if $s$ is $\Pic(\XX)$-irreducible.
In the following we use the notations
\[
R \coloneqq \RR(\XX), A \coloneqq \Pic(\XX), R' = \RR(\XX') \text{ and } A' = \Pic(\XX').
\]
Assume that $s$ is not $A$-irreducible in $R$, so there is a non-trivial factorisation $s=s_1 s_2$ into $A$-homogeneous elements. Since these are also $A'$-homogeneous, we have by $z^n = s_1 s_2$ two homogeneous factorisations for $s$ in $R[z]/(z^n-s)$. So $R'$ is not $A'$-factorial.

Now suppose that $R'$ not $A'$-factorial.
If $t_1 t_2 = t_3 t_4$ are two different homogeneous factorisations in $R'$,
then we can write $t_i = z^{m_i} t'_i$ where $t'_i \in R$ and $m_i$ is chosen to be maximal. Therefore, $s$ does not divide $t'_i$.
Then we obtain the equality
\[
z^{m_1+m_2} t_1' t_2' = z^{m_3+m_4} t_3' t_4'
\]
where $m_1+m_2 = m_3+m_4 \ \mod n$. So we assume without loss of generality that $m_1+m_2 = m_3+m_4 + ln$ with $l \in \NN_0$.
Reducing both sides by $z^{m_3+m_4}$ we obtain
\[
z^{ln} t_1't_2' = s^{l} t_1't_2' = t_3't_4'
\]
The latter equality are two homogeneous factorisations in $R$. By the maximality of the $m_i$'s this implies that $s$ cannot be $A$-irreducible.
\end{proof}

\subsection*{More Mori dream stacks and maps}

Let $Y$ be a Mori dream space. For a subgroup $\KK \subset \Cl(Y)$, we can define 
the $\KK$-graded
\[
\RR(Y;\KK) = \bigoplus_{[D] \in \KK} H^0(Y,\OO(D)).
\]

\begin{remark}
\label{rem:divquot_for_Y}
If we consider $\Pic(Y) \subset \KK \subset \Cl(Y)$ then it still holds $Y = \quot{\Spec \RR(Y;\KK) \setminus V }{ \Hom(\KK,\kk^*)}$, see for example \cite[\S 4.2.1]{Coxrings}.
On the other hand, the quotient stack
\[
\YY_\KK \coloneqq \stackquot{\Spec \RR(Y;\KK) \setminus V}{\Hom(\KK,\kk^*)}
\]
will be different from $\YY^{\ass}$.
\end{remark}

\begin{lemma}
Let $Y$ be a $\QQ$-factorial variety and $\KK$ a group with $\Pic(Y) \subset \KK \subset \Cl(Y)$.
If $\RR(Y)$ is a finitely generated $\kk$-algebra, then $\RR(Y;\KK)$ as well.
Moreover, $\RR(Y)$ is a finitely generated $\RR(Y;\KK)$-module.
\end{lemma}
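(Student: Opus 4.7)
The idea is to exhibit $\RR(Y;\KK)$ sandwiched between two finitely generated $\kk$-algebras via an integral extension argument, reducing both claims to standard Noetherian bookkeeping.

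First I would use $\QQ$-factoriality to observe that $\Cl(Y)/\Pic(Y)$ is torsion, and hence so is the further quotient $\Cl(Y)/\KK$ since $\Pic(Y) \subseteq \KK \subseteq \Cl(Y)$. In particular, for every class $d \in \Cl(Y)$ some positive multiple lies in $\KK$. Now pick homogeneous $\kk$-algebra generators $r_1,\dots,r_N$ of $\RR(Y)$, of degrees $d_1,\dots,d_N \in \Cl(Y)$, and choose positive integers $n_i$ with $n_i d_i \in \KK$. Setting $n \coloneqq \lcm(n_1,\dots,n_N)$, each $r_i^n$ has degree in $\KK$, so $r_i^n \in \RR(Y;\KK)$.

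Next I would introduce the intermediate subalgebra
\[
A \coloneqq \kk[r_1^n,\dots,r_N^n] \subseteq \RR(Y;\KK) \subseteq \RR(Y).
\]
By construction each $r_i$ satisfies the monic equation $x^n - r_i^n = 0$ over $A$, so $\RR(Y)$ is integral over $A$. Since $\RR(Y)$ is a finitely generated $A$-algebra (being generated by the $r_i$) and integral over $A$, it is a finitely generated $A$-module. Because $A$ is itself a finitely generated $\kk$-algebra and hence Noetherian, $\RR(Y)$ is a Noetherian $A$-module, so the sub-$A$-module $\RR(Y;\KK)$ is finitely generated as an $A$-module and therefore also as a $\kk$-algebra. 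Finally, the fact that $\RR(Y)$ is finitely generated as an $A$-module, combined with $A \subseteq \RR(Y;\KK)$, immediately yields that $\RR(Y)$ is a finitely generated $\RR(Y;\KK)$-module.

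The only real obstacle is the choice of the subalgebra $A$: once one spots that raising generators to the common power $n$ sends them into $\RR(Y;\KK)$ and simultaneously makes the extension $A \subseteq \RR(Y)$ integral, the rest is formal. An alternative route would be to note that $\Cl(Y)/\KK$ is finite (a finitely generated torsion abelian group, once one extracts finite generation of $\Cl(Y)$ from the effective degrees of the $r_i$) and to apply Hilbert--Noether to the character action of $\Hom(\Cl(Y)/\KK,\kk^*)$ on $\RR(Y)$ whose invariants are $\RR(Y;\KK)$; the integrality proof above is cleaner as it sidesteps this finiteness discussion.
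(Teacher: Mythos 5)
Your proof is correct and follows essentially the same route as the paper: in both, the key point is that $\QQ$-factoriality forces $r_i^{n_i}\in\RR(Y;\KK)$ for homogeneous algebra generators $r_i$ of $\RR(Y)$, whence $\RR(Y)$ is a finite $\RR(Y;\KK)$-module. The only cosmetic difference is that the paper then invokes the Artin--Tate lemma (Atiyah--Macdonald, Prop.~7.8) to deduce finite generation of $\RR(Y;\KK)$ as a $\kk$-algebra, whereas you unfold that argument explicitly via the intermediate subalgebra $A=\kk[r_1^n,\dots,r_N^n]$ and Noetherianity.
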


\begin{proof}
We abbreviate $\RR \coloneqq \RR(Y)$ and $\RR' \coloneqq \RR(Y;\KK)$.
By \cite[Prop.~7.8]{Atiyah-Macdonald}, we can restrict ourselves to show that $\RR$ becomes a finitely generated $\RR'$-module by the inclusion $\RR' \into \RR$.
Let $r_1,\ldots,r_l \in \RR$ be generators of $\RR$ as a $\kk$-algebra. Since $Y$ is $\QQ$-factorial, there are positive integers $b_i$ such that $r_i^{b_i} \in \RR'$. Then $\RR$ is generated by products of the $r_i^j$ with $1\leq i \leq l$ and $1\leq j < b_i$ as an $\RR'$-module.
\end{proof}

\begin{definition}
Let $G$ and $H$ be abelian groups, $R$ and $S$ $\kk$-algebras graded with respect to $G$ and $H$, respectively. Let $\alpha\colon G \to H$ be a group homomorphism. A morphism of $\kk$-algebras $\phi\colon R \to S$ is called \emph{homogeneous} 
if $\phi(R_g) \subseteq S_{\alpha(g)}$, for any $g \in G$. 

Similarly, if $G$ and $H$ act on spaces $U$ and $V$, respectively, we call $f\colon U\to V$ \emph{equivariant} if $f \circ \sigma_g = \sigma_{\alpha(g)} \circ f$, where $\sigma$ denotes the action.
\end{definition}

\begin{remark}
Any equivariant map $f \colon U \to V$ induces maps $U/G \to V/H$ and $[U/G] \to [V/H]$.
Especially, for a Mori dream space $Y$ and subgroups $\KK \subset \KK' \subset \Cl(Y)$ we obtain $\YY_{\KK'} \onto \YY_\KK$.
\end{remark}

\section{The Cox lift}
\label{sec:coxlift}

Suppose $\phi\colon X \to Y$ is a map between  Mori dream spaces which we want to lift to a map $\phi^*\colon\RR(Y) \to \RR(X)$.
One important property of a Cox ring is that any effective prime divisor $D$ on $Y$ is given by a global homogeneous equation $r \in \RR(Y)$.
Ideally, $\phi^*(r)$ would be just the equation of the pulled-back divisor $\phi^*(D)$. But if $D$ is only a Weil divisor and not necessarily Cartier, then we have no well-defined pull-back of $D$. As $Y$ is $\QQ$-factorial, some multiple $nD$ is Cartier.
Say that $\phi^*(nD)$ is given by some $s \in \RR(X)$. The key idea is to introduce the necessary roots $\sqrt[n]{s}$ to $\RR(X)$ if $s$ is non-zero or otherwise to take a line bundle root of $\OO(\phi^*(nD)$ to obtain a map $\Phi^*\colon \RR(Y) \to \RR(\XX)$ of Cox rings.

The following lemma can also be seen as an easy consequence of \cite[Prop.~5.6]{Berchtold-Hausen}, which essentially states that the assignment $Y \mapsto \RR(Y;\Pic(Y))$ (together with the irrelevant ideal) is functorial.
Still we present a proof here for the convenience of the reader.

\begin{lemma}
\label{lem:inducedpicmap}
If $\phi\colon X\to Y$ is a map of  Mori dream spaces,
then there is an induced homogeneous morphism $\Phi_0^*\colon\RR(Y;\Pic(Y)) \to \RR(X)$ and consequently an equivariant map $\Phi_0\colon \Spec \RR(X) \setminus V \to \Spec \RR(Y;\Pic(Y)) \setminus V$.
\end{lemma}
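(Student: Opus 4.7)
The plan is to construct $\Phi_0^*$ directly using pullback of line bundles and their sections under $\phi$, and then take $\Spec$ to produce $\Phi_0$. I would begin by fixing explicit presentations of both Cox rings: Cartier divisors $D_1,\dots,D_l$ on $Y$ whose classes generate $\Pic(Y)$, Weil divisors $D'_1,\dots,D'_m$ on $X$ whose classes generate $\Cl(X)$, the free groups $K_Y, K_X$ they generate, kernels $K_Y^0, K_X^0$ of the surjections onto $\Pic(Y)$ and $\Cl(X)$, and group-homomorphism lifts $\chi_Y\colon K_Y^0 \to \kk(Y)^*$ and $\chi_X\colon K_X^0 \to \kk(X)^*$ (chosen as homomorphisms since the $K^0$ are free). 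Because the $D_i$ are Cartier, pullback of line bundles gives a well-defined homomorphism $\phi^*\colon \Pic(Y) \to \Pic(X) \subset \Cl(X)$, and for each $D_i$ one can pick $E_i \in K_X$ and $f_i \in \kk(X)^*$ with $\phi^*D_i = E_i + \div f_i$. Linear extension then produces a group homomorphism $\alpha\colon K_Y \to K_X$ lifting $\phi^*\colon \Pic(Y) \to \Cl(X)$, together with a cocycle $D \mapsto f_D = \prod f_i^{n_i}$ satisfying $\div f_D = \phi^*D - \alpha(D)$ for $D = \sum n_i D_i$.

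Next I would define $\Phi_0^*(s) \coloneqq f_D \cdot \phi^*s$ for $s \in H^0(Y, \OO_Y(D))$. The computation $\div(f_D \phi^*s) + \alpha(D) = \phi^*(\div s + D) \geq 0$ places $\Phi_0^*(s)$ in $H^0(X, \OO_X(\alpha(D)))$, and multiplicativity of $f_D$ in $D$ together with multiplicativity of pullback in sections yields a graded ring homomorphism $H^0(Y, \SS_Y) \to H^0(X, \SS_X)$ between the Cox sheaves' global sections.

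The central step is descending this to the quotients $\RR(Y;\Pic(Y)) = H^0(\SS_Y)/H^0(\II_Y)$ and $\RR(X) = H^0(\SS_X)/H^0(\II_X)$. For $E \in K_Y^0$ we have $\alpha(E) \in K_X^0$ (since $[\alpha(E)] = \phi^*[E] = 0$), and a direct divisor calculation gives $\Phi_0^*(\chi_Y(E)) = f_E^{-1}\phi^*\chi_Y(E) = c_E \cdot \chi_X(\alpha(E))$ for some constant $c_E \in \kk^*$. As $\chi_Y$ and $\chi_X$ are homomorphisms, $E \mapsto c_E$ is itself a group homomorphism $K_Y^0 \to \kk^*$; since $\kk^*$ is divisible (hence injective as an abelian group), it extends to $\lambda\colon K_Y \to \kk^*$ with $\lambda|_{K_Y^0} = c^{-1}$. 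Replacing each $f_i$ by $\lambda(D_i)^{-1} f_i$ preserves $\phi^*D_i = E_i + \div f_i$ while forcing $\Phi_0^*(1 - \chi_Y(E)) = 1 - \chi_X(\alpha(E)) \in H^0(\II_X)$ for every $E \in K_Y^0$. Thus $\Phi_0^*$ descends to the desired homogeneous algebra map $\RR(Y;\Pic(Y)) \to \RR(X)$.

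Finally, applying $\Spec$ to $\Phi_0^*$ yields the equivariant map $\Phi_0\colon \Spec \RR(X) \to \Spec \RR(Y;\Pic(Y))$ under the dual torus homomorphism. It restricts correctly to the complements of the irrelevant loci because on the GIT quotients $\Phi_0$ descends by construction to $\phi\colon X \to Y$, so a point of $\Spec \RR(X) \setminus V$ represents some $x \in X$ and must map to a point representing $\phi(x) \in Y$, which cannot lie in the irrelevant locus of $\Spec \RR(Y;\Pic(Y))$. The main obstacle in this argument is the simultaneous trivialisation of the scalars $c_E$; this is handled by the homomorphism property of $c$ and the divisibility of $\kk^*$, with the rest being routine bookkeeping.
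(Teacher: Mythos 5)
Your overall strategy coincides with the paper's: present $\Pic(Y)$ and $\Cl(X)$ by free groups of divisors, pull back sections degreewise to get a graded map between the global sections of the Cox sheaves, normalise so that $H^0(\II_Y)$ maps into $H^0(\II_X)$, and then control the irrelevant loci. Your normalisation via the character $E\mapsto c_E$ and the divisibility of $\kk^*$ is a legitimate alternative to the paper's rescaling of $\chi_Y$ on a basis of $C_Y^0$. However, there is one concrete gap at the very start, and it is exactly the point the paper's proof is careful about. You write $\phi^*D_i = E_i + \div f_i$ and later $\Phi_0^*(s) = f_D\cdot\phi^*s$, where $\phi^*D_i$ is the pullback of the divisor $D_i$ and $\phi^*s$ is the pullback of $s$ as a rational function. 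Both are undefined when $\im(\phi)\subset\supp D_i$: the Cartier divisor $D_i$ then has no divisor pullback (only its line bundle class pulls back), so the $f_i$, and with them the cocycle $f_D$, need not exist; and a section $s\in H^0(Y,\OO(D))$ with a pole along the component of $\supp D$ containing $\im(\phi)$ cannot be composed with $\phi$. This situation is not exotic --- take $\phi$ to be the inclusion of a divisor whose support meets $\supp D_i$ badly. The paper resolves it by replacing each $D_i$ with a linearly equivalent Cartier divisor whose support misses a chosen point of $\im(\phi)$ (possible by local principality), after which $\im(\phi)\not\subset\supp D$ for every $D\in C_Y$ by irreducibility of $\im(\phi)$, and naive pullback of rational functions is defined; your correction factors $f_D$ then become essentially superfluous, since one may simply choose $K_X$ to contain the image of the divisor pullback. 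Alternatively you could recast the construction entirely in terms of pullback of line bundles and their sections, but then $f_D$ must be replaced by chosen isomorphisms $\phi^*\OO_Y(D)\cong\OO_X(\alpha(D))$ rather than by rational functions.

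A smaller point: your argument for the irrelevant loci is circular as stated. To say that a point $p\in\Spec\RR(X)\setminus V_X$ lying over $x\in X$ maps to ``a point representing $\phi(x)$'' already presupposes $\Phi_0(p)\notin V_Y$, since points of $V_Y$ represent no point of $Y$ at all. The paper makes this precise by covering $Y$ with affine charts $Y_{g_j}$ for generators $g_j$ of $\Jirr(Y)$, covering $\phi^{-1}(Y_{g_j})$ by charts $X_{f_{ij}}$ with $f_{ij}\in\Jirr(X)$, and verifying that $\Phi_0^*(g_j)$ is invertible on $\Spec\RR(X)[f_{ij}^{-1}]$; some such local computation is what your sketch would need to supply.
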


\begin{proof}
\newcommand{\phiCK}{\phi_{\div}^*}
\newcommand{\phiPicCl}{\phi^*}
\newcommand{\PhiS}{\tilde\Phi_0^*}
\newcommand{\PhiR}{\Phi_0^*}
Let $C^0_Y \into C_Y \onto \Pic(Y)$ be a free presentation of $\Pic(Y)$ where $C_Y = \bigoplus_{i=1}^l \ZZ \cdot D_i$ with $D_i \in \CDiv(Y)$ (which need not to be effective).
Without loss of generality $\im(\phi)$ is not contained in $\supp D_i$ for any $i$.
Let us assume the contrary that $\im(\phi) \subset \supp D_i$ for some $i$.
The $D_i$ is locally principal 
at any given point, for example $y \in \im(\phi)$.
Therefore $D_i$ is given as $\div(s)$ on an open neighbourhood $U$ of $y$.
If we replace $D_i$ by the linearly equivalent divisor $\tilde D_i = D_i - \div(s)$,
then $\supp \tilde D_i \cap U = \emptyset$, especially $\im(\phi) \not\subset \supp \tilde D_i$.

So we can define $\phiCK \colon C_Y \to \WDiv(X)$ whose image is contained in some free subgroup $K_X \subset \WDiv(X)$ that generates $\Cl(X)$. 
Obviously this map carries $C_Y^0$ to $K^0_X$ as it is just the composition of the rational functions with $\phi$. Therefore we get the following commutative diagram:
\[
\xymatrix@R=3ex@C=1em{
C^0_Y \ar@{^(->}[r] \ar[d] & C_Y \ar@{->>}[r] \ar[d]^{\phiCK} & \Pic(Y) \ar@{-->}[d]^{\phi^*}\\
K^0_X \ar@{^(->}[r] & K_X \ar@{->>}[r] & \Cl(X)
}
\]
where the dashed arrow is the one induced from $\phiCK$ on the quotient.

Next, we claim that we have a well-defined ring homomorphism
\[
H^0(Y,\SS_Y) = \bigoplus_{D \in C_Y} H^0(Y,\OO_Y(D)) \xto{\PhiS} H^0(X,\SS_X) = \bigoplus_{D \in K_X} H^0(X,\OO_X(D))
\]
which is graded with respect to $\phiCK \colon C_Y \to K_X$. Here we consider the global sections as rational functions, so their pullback is not problematic as $\im(\phi) \not\subset \supp D$ for all $D \in C_Y$.

\newcommand{\newchiY}{\widetilde{\chi}_Y}
Choose now maps $\chi_X: K_X^0 \to \kk(X)^*$ and $\chi_Y: C_Y^0 \to \kk(Y)^*$, as in the construction of the Cox ring.
Then there exist $\alpha_i \in \kk^*$, such that $\chi_X(\phiCK(D_i))= \alpha_i \cdot  \phi^*(\chi_Y(D_i))$, for $1 \leq i\leq l$. 
Define $\newchiY$ by setting $\newchiY(D_i) =   \alpha_i \cdot \chi_Y(D_i)$, then the following diagram commutes     
\[
\xymatrix@R=3ex@C=2em{
C^0_Y \ar[r]^-{\newchiY} \ar[d]_{\phiCK} & \kk(Y)^* \ar[d]^{\phi^*}\\
K^0_X \ar[r]^-{\chi_X}                 & \kk(X)^*
}
\] 
Using $\chi_X$ and $\newchiY$ to define the homogeneous ideals $\II_X$ and $\II_Y$, 
the map $\PhiS\colon H^0(Y,\SS_Y) \to H^0(X,\SS_X)$ restricts to a map $H^0(Y,\II_Y) \to H^0(X,\II_X)$ and therefore induces the map 
$\Phi_0^*\colon \RR(Y;\Pic(Y)) \to \RR(X)$.

To get $\Phi_0$, we only need to see that the image of the irrelevant locus of $X$ is contained in the one of $Y$.
We cover $Y$ by the affine sets $Y_j \coloneqq Y_{g_j} = \{g_j \neq 0\}$, where we choose the $g_j$ as homogeneous generators of the irrelevant ideal of $Y$. In the same way cover the preimage of $Y_j$ under $\phi$ with affine sets $X_{ij} \coloneqq X_{f_{ij}}$, where the $f_{ij}$ are in the irrelevant ideal of $X$. This is possible by \cite[Prop.~1.6.3.3]{Coxrings} as these $X_{ij}$ form a base of the topology, additionally even $\sqrt{(f_{ij} \mid i,j )} = \Jirr(X)$ holds.

Especially we can restrict $\phi$ for all $i,j$ to $X_{ij} \to Y_j$. As $X$ and $Y$ are good quotients, we get the following diagram of rings
\[
\xymatrix@R=3ex@C=5em{
**[r]\RR(Y;\Pic(Y))[g_j^{-1}] \ar@{-->}[r] & **[r]\RR(X)[f_{ij}^{-1}] \\
**[r]\RR(Y;\Pic(Y))[g_j^{-1}]^{\Hom(\Pic(Y),\kk^*)} \ar@<-8ex>@{^(->}[u] \ar[r]_-{\phi^*} & **[r]\RR(X)[f_{ij}^{-1}]^{\Hom(\Cl(X),\kk^*)} \ar@<-3ex>@{^(->}[u]
}
\]
Note that $\RR(X)[f_{ij}^{-1}]$ is the coordinate ring of $\tilde X_{ij} \coloneqq \pi^{-1}(X_{ij})$ using that $\pi\colon \Spec\RR(X) \setminus V \onto X$ is a good quotient map.
By the choices of the open sets, $\Phi_0^*(g_j)$ vanishes only outside of $\tilde X_{ij}$, hence is an invertible function in $\RR(X)[f_{ij}^{-1}]$.
So the dashed arrow exists, and therefore $\Phi_0^*$ induces $\Phi_0\colon \Spec \RR(X)\setminus V \to \Spec \RR(Y;\Pic(Y))\setminus V$.
\end{proof}

The map in the previous lemma depends on choices, but any two such maps are equivalent up to non-unique isomorphisms of the respective Cox rings.

\begin{theorem}
\label{thm:lift}
Let $\phi\colon X \to Y$ be a map between Mori dream spaces
with $H^0(\Spec \RR(X) \setminus V,\OO^*)=\kk^*$ and $\Pic(\Spec \RR(X) \setminus V) = 0$.
There is an \MD-quotient stack $\XX$ with coarse moduli space $X$ with the
following properties:
\begin{enumerate}
\setlength{\multicolsep}{0ex}
\item \label{itm:lift_root} The stack $\XX$ is built from $\Xass$ by root constructions
with prime divisors and line bundles, in particular the Cox ring $\RR(\XX)$ is graded factorial.
\item \label{itm:lift_quotient} There is a homogeneous morphism $\Phi^*\colon
\RR(Y) \to \RR(\XX)$ such that
the following diagrams commute: 
\[
\xymatrix@R=3ex@C=1em{
\RR(Y) \ar[r]^{\Phi^*} & \RR(\XX) &&
 \XX \ar[r]^-\Phi \ar[d] & \YY^{\ass} \ar[d]\\
\RR(Y; \Pic(Y)) \ar[u] \ar[r]_-{\phi^*} & \RR(X) \ar[u] &&
 X \ar[r]_\phi & Y
}
\]
\item \label{itm:lift:3} 
The stack $\XX$ is minimal:
given any \MD-quotient stack $\XX'$ that satisfies \eqref{itm:lift_quotient},
then the map $\XX'\to X$ to the coarse moduli space factors through $\XX$.
\end{enumerate}
We call $\Phi\colon \XX \to \YY^{\ass}$ the \emph{Cox lift} of the map $\phi\colon X \to Y$.
\end{theorem}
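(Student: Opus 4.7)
The plan is to start from the canonical \MD-stack $\Xass$ (available by \autoref{rem:complete} and \autoref{prop:Xass}) together with the partial homogeneous lift $\Phi_0^*\colon \RR(Y;\Pic(Y)) \to \RR(X)$ from \autoref{lem:inducedpicmap}, and to root $\Xass$ just enough so that $\Phi_0^*$ extends to a homogeneous morphism $\Phi^*\colon\RR(Y) \to \RR(\XX)$. Since $Y$ is $\QQ$-factorial, for each prime divisor $D$ on $Y$ with h-irreducible equation $r_D \in \RR(Y)$ there is a smallest $n_D \in \NN$ such that $n_D D$ is Cartier, and then $r_D^{n_D} \in \RR(Y;\Pic(Y))$ with $s_D \coloneqq \Phi_0^*(r_D^{n_D})$ already lying in $\RR(X)$. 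The aim is to arrange that $s_D$ admit a distinguished $n_D$-th root in $\RR(\XX)$, and to set $\Phi^*(r_D)$ equal to that root.

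Two cases determine which root constructions to perform. If $s_D\neq 0$, graded factoriality of $\RR(X)$ (from \autoref{prop:Xass}) gives a factorisation $s_D = \alpha_D \prod_i s_{D,i}^{a_{D,i}}$ into h-irreducibles, and the prime $E_{D,i} = \div(s_{D,i}) \subset X$ must be rooted to order $m_{D,i} \coloneqq n_D/\gcd(n_D,a_{D,i})$. If instead $s_D = 0$, then $\im(\phi) \subset D$ and a line bundle root of order $n_D$ along $\phi^*\OO_Y(n_D D)$ is needed to host $\Phi^*(r_D)$ as a zero section of the correct new $\Pic(\XX)$-degree. Collecting these requirements, I would perform on $\Xass$, first, for each prime $E \subset X$ arising in some Case-A factorisation, the divisor root $\sqrt[m_E]{E/\cdot}$ with $m_E$ the LCM of all required orders, and afterwards the line bundle roots attached to the Case-B primes. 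By iterated application of \autoref{prop:roots-preserve-MD} (relying on primality of the $E_{D,i}$) the resulting $\XX$ is an \MD-quotient stack satisfying~\eqref{itm:lift_root} with coarse moduli $X$.

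Next, I would define $\Phi^*$ on each $r_D$ as the prescribed monomial in the new roots (with a fixed scalar $n_D$-th root of $\alpha_D \in \kk^*$ in Case A, which exists since $\kk$ is algebraically closed) or as a zero of the appropriate $\Pic(\XX)$-degree in Case B, with grading homomorphism $\Cl(Y) \to \Pic(\XX)$ sending $[D]$ to $\deg \Phi^*(r_D)$. The main obstacle will be verifying that this prescription is compatible with the relations defining $\RR(Y)$ via the lift $\chi_Y\colon K_Y^0 \to \kk(Y)^*$: their $n_D$-th powers already hold in $\RR(X)$ because $\Phi_0^*$ is a ring homomorphism, and graded factoriality of $\RR(\XX)$ renders $n_D$-th roots unique up to homogeneous units, so a consistent choice of the scalar roots $\alpha_D^{1/n_D}$ (together with the freedom in the lift $\chi_X$ from \autoref{lem:inducedpicmap}) should close the relations and yield \eqref{itm:lift_quotient}. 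The commuting stack square then follows by descending the induced equivariant morphism on spectra to coarse moduli spaces. Finally, for minimality~\eqref{itm:lift:3}, any \MD-quotient stack $\XX'$ satisfying \eqref{itm:lift_quotient} must contain in $\RR(\XX')$ an $n_D$-th root of each $s_D$ (up to units), and by graded factoriality these roots are supported on exactly the same divisors $E_{D,i}$ and pulled-back line bundles, with orders divisible by $m_{D,i}$ and $n_D$; the ensuing graded inclusion $\RR(\XX) \into \RR(\XX')$ then induces via \autoref{lem:algebra-map} the desired surjection $\XX' \to \XX$ compatible with $X$.
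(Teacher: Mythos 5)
Your overall strategy coincides with the paper's: lift $\phi$ to $\Phi_0^*\colon\RR(Y;\Pic(Y))\to\RR(X)$ via \autoref{lem:inducedpicmap}, use graded factoriality to factor the pullbacks of Cartier powers, take divisor roots along the primes that occur (respectively line bundle roots when the pullback vanishes), and deduce minimality from the essential uniqueness of roots in a graded factorial ring. However, your one-shot version has a genuine gap exactly where the paper does its real work, namely the simultaneous choice of the scalars $\alpha_D^{1/n_D}$. The images of different generators are multiplicatively coupled: whenever a monomial $\mathbf r^{\mathbf c}=\prod_j r_{j}^{c_j}$ already lies in $\RR(Y;\Pic(Y))$, the product of the chosen roots must equal $\Phi_0^*(\mathbf r^{\mathbf c})$ \emph{exactly}, not merely up to a homogeneous unit (see the condition $i+j=k$ in \autoref{ex:mu3}). ``Roots are unique up to units'' therefore does not ``close the relations''; one must solve a whole system of congruences for the exponents of the roots of unity. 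The paper makes this solvable by refining $\Pic(Y)\subset\Cl(Y)$ through a chain of subgroups $\KK_i$ with successive quotients $\ZZ_p$, $p$ prime, so that the constraints become a linear system over the \emph{field} $\ZZ_p$ supported on a hyperplane of $\ZZ_p^n$, solvable by prescribing values on a basis. In your construction the analogous system lives over $\ZZ_{n_D}$ with $n_D$ composite, where this linear algebra fails as stated; you would need a substitute argument (e.g.\ extending the character $\mathbf c\mapsto\mu_{\mathbf c}$ from the sublattice of monomials landing in $\RR(Y;\Pic(Y))$ to all of $\ZZ^n$ using divisibility of $\kk^*$), and none is given. The same issue silently reappears in your minimality step, where the graded inclusion $\RR(\XX)\into\RR(\XX')$ must again be shown to respect these relations.

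A second, more elementary problem is the quantifier ``for each prime divisor $D$ on $Y$ with h-irreducible equation $r_D$'': in general there are infinitely many prime divisors whose class lies outside $\Pic(Y)$ (e.g.\ general members of linear systems), each contributing primes $E_{D,i}\subset X$ to be rooted, so your recipe prescribes infinitely many root constructions and the result would not be an \MD-quotient stack (finite generation of $\Pic$ and of the Cox ring is lost). The construction must be restricted to a finite set of homogeneous generators of $\RR(Y)$ over $\RR(Y;\Pic(Y))$, as the paper does at each inductive step; the images of all remaining h-irreducible sections are then automatically polynomials in the images of the generators and require no further roots. With these two points repaired (and the verification, also omitted, that $\Phi^*$ kills every homogeneous relation $f(r_1,\dots,r_n)=0$, which the paper handles by clearing denominators into $\RR(Y;\KK_i)$ and invoking graded factoriality), your argument would essentially reassemble into the paper's proof.
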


\begin{proof}
We first construct an \MD-stack $\XX$ inductively, which satisfies  \eqref{itm:lift_root} and~\eqref{itm:lift_quotient}.

\step{Setup}
Let $\KK_0 = \Pic(Y)$ and define
\[
\YY_0 \coloneqq \stackquot{\Spec \RR(Y;\KK_0) \setminus V}{\Hom(\KK_0,\kk^*)}.
\]
Consider the map of \autoref{lem:inducedpicmap}
\[
\Phi_0^* \colon \RR(Y;\KK_0) \to \RR(X) = \RR(\Xass), 
\]
inducing the map
$\Phi_0 \colon \XX_0 \coloneqq \Xass \to \YY_0$.
Since $\RR(\XX_0)$ is graded factorial,
if $\KK_0 = \Cl(Y)$, then the map $\Phi_0$ is already the Cox lift.

Otherwise, we proceed by induction. So let us assume, we have constructed a map $\Phi_i \colon \XX_i \to \YY_i$ given by a homogeneous $\Phi_i^* \colon \RR(Y;\KK_i) \to \RR(\XX_i)$, where $\XX_i$ is obtained by roots from $\XX_{i-1}$ and whose Cox ring $\RR(\XX_i)$ is graded factorial.

\step{Step (A): Add a Weil divisor and its generating sections}
If $\KK_i \neq \Cl(Y)$, choose a divisor class $[D_{i+1}] \in \Cl(Y) \setminus \KK_i$ such that there is a prime $p \in \NN$ such that $p[D_{i+1}] \in \KK_i$. Define $\KK_{i+1} \coloneqq \genby{\KK_i,[D_{i+1}]}$ and set
\[
\YY_{i+1} \coloneqq \stackquot{ \Spec \RR(Y;\KK_{i+1}) \setminus V}{\Hom(\KK_{i+1},\kk^*)}.
\]
Then we can write $\RR(Y,\KK_{i+1})$ as an $\RR(Y;\KK_i)$-algebra
\[
\RR(Y;\KK_{i+1}) = \RR(Y;\KK_i)[r_1,\ldots,r_l]
\]
where the $r_j$ form a minimal set of homogeneous generators, i.e.\ each $r_j$ is in $H^0(Y,F_j)$ for some $F_j \in \KK_{i+1} \setminus \KK_i$.

By construction, we get that $r_{j}^p \in \RR(Y;\KK_i)$ for all $j$.
Therefore, $\Phi_i^*(r_j^p) \in \RR(\XX_i)$ is well-defined for each $j$.

\step{Step (B): If some pullbacks are non-zero, take appropriate roots along divisors}
Suppose there is at least a non-zero $\Phi_i^*(r_j^p)$.
As $\RR(\XX_i)$ is graded factorial, there is a set $\{\qq_1,\ldots,\qq_m\}$ of h-irreducible elements such that any non-zero $\Phi_i^*(r_j^p)$ can be written as
\[
\Phi_i^*(r_j^p) = \prod_{l=1}^m \qq_{l}^{a_{l,j}}
\]
where the $a_{l,j} \geq 0$ are integers.
For each of these factors $\qq_l$, define 
\[
b_l = \frac{p}{\gcd(p,a_{l,1},\ldots,a_{l,m})} = 
\begin{cases}
1   & \text{$p$ divides $a_{l,j}$ for all $j$;}\\
p & \text{else.}
\end{cases}
\]

Let $E_1,\ldots,E_{m}$ be the prime divisors on $\XX_i$ corresponding to $\qq_1,\ldots,\qq_m$.
Define $\XX_{i+1}$ as the following root over $\XX_i$:  
\[
\XX_{i+1} = \sqrt[(b_1,\ldots,b_m)]{(E_1,\ldots,E_m)/\XX_i},
\]
in particular, the Cox ring 
\[
\RR(\XX_{i+1}) = \RR(\XX_i)[z_1,\ldots,z_m]/(z_1^{b_1}-\qq_1,\ldots,z_m^{b_m}-\qq_m)
\]
is graded factorial by \autoref{lem:graded_ufd}. 
Here we mean subsequent root constructions along the $E_l$,
so that the $E_l$ stay prime when lifted to the previous root construction. Geometrically, only stabilisers will be added along the previous divisors (and outside the stack remains unchanged), therefore the $E_l$ stay prime as they are pairwise different.
We also remark that if $b_l=1$, then the according root construction keeps the stack unchanged.

Next we define the morphism
\[\Phi_{i+1}^* \colon  \RR(Y;\KK_{i+1}) = \RR(Y;\KK_i)[r_1,\ldots,r_n]  \to  \RR(\XX_{i+1})\]
by extending $\Phi_i^*$ with
\[ r_j  \mapsto  {\displaystyle \xi^{\alpha_j} \prod_{l=1}^m \sqrt[p]{\qq_l^{a_{l,j}}}} \]
where by slight abuse of notation we mean
\[
\sqrt[p]{\qq_l^{a_{l,j}}} = z_l^{\frac{b_l a_{l,j}}p}.
\] 
Moreover, there is the choice of an $p$-th root of unity $\xi^{\alpha_j}$ for each $\Phi_{i+1}^*$, where $\xi$ is a primitive root.
Obviously, this turns
\[
\xymatrix@R=3ex@C=2em{
\RR(Y;\KK_{i+1}) \ar[r]^{\Phi_{i+1}^*} & \RR(\XX_{i+1})\\
\RR(Y;\KK_i) \ar[r]^{\Phi_i^*} \ar[u] & \RR(\XX_i) \ar[u]
}
\]
into a commutative diagram of homogeneous morphisms, provided that $\Phi_{i+1}^*$ preserves the relations among the generators.

\smallskip 

First we will determine the elements $\xi^{\alpha_j}$, as they cannot be chosen arbitrarily. 
We want $\Phi_{i+1}^*$ to coincide with $\Phi_i^*$ on $\RR(Y;\KK_i)$, which can be checked on monomials $\mathbf r^{\mathbf c} = r_1^{c_1}\cdots r_n^{c_n}$.
As $\Phi_i^*(\mathbf r^{\mathbf c}) \in \RR(\XX_i) \subset \RR(\XX_{i+1})$,
we get by graded factoriality in $\RR(\XX_{i+1})$ that
\[
\Phi_i^*(\mathbf r^{\mathbf c}) = \xi^{\alpha_{\mathbf c}} \prod_{j=1}^n \prod_{l=1}^m \left(\sqrt[p]{\qq_l^{a_{l,j}}}\right)^{c_j}
\]
As we ask for equality
$\Phi_i^*(\mathbf r^{\mathbf c}) = \Phi_{i+1}^*(\mathbf r^{\mathbf c})$,
we get for each monomial $\mathbf r^{\mathbf c} \in \RR(Y;\KK_i)$ a linear equation for the $\alpha_j$ in the field $\ZZ_p$:
\[
\alpha_{\mathbf c} = c_1 \cdot \alpha_1 + \cdots + c_n \cdot \alpha_n.
\]
Up to multiplication with $p$-th powers $r_j^p$ (which does not change the exponent of $\xi$), all monomials $\mathbf r^{\mathbf c} \in \RR(Y;\KK_i)$ are in bijection to the $(n-1)$-dimensional $\ZZ_p$-vector space
\[
\Big\{ \mathbf c \in \ZZ_p^n \mid \sum_{j=1}^n c_j \cdot \deg_{i+1}(r_j) = 0  \text{ in } \ZZ_p \Big\}
\]
Hence only the linear relations for $(n-1)$ monomials corresponding to a basis of this vector space have to be fulfilled. Choose any of these $p$ solutions.

\smallskip

With $\xi^{\alpha_j}$ now fixed, we can complete the check of the well-definedness of $\Phi_{i+1}^*$. Consider an arbitrary homogeneous relation $f( r_1,\ldots,r_n ) =0$ among the generators of $\RR(Y;\KK_{i+1}) \subset \RR(Y)$.
Since $\RR(Y)$ is graded factorial, $f(r_1,\ldots,r_n)=0$ if and only if $r_j^a f(r_1,\ldots,r_n)=0$ for any $a \in \NN$. By choosing $a$ appropriately, latter is a relation in $\RR(Y;\KK_i)$, i.e.\ a sum of monomials $r_1^{c_1}\cdots r_n^{c_n}$ with coefficients in $\RR(Y;\KK_i)$ such that $\sum c_j \deg_{i+1}(r_j) = 0 \in \KK_{i+1}/\KK_i$ for all these monomials.
Now $\Phi_i^*$ is already a ring homomorphism, hence $\Phi_i^*(r_j^a f(r_1,\dots,r_n))=0$ in $\RR(\XX_i) \subset \RR(\XX_{i+1})$.
As $\Phi_i^*$ and $\Phi_{i+1}^*$ coincide on $\RR(Y;\KK_i)$, 
we can rewrite
\[
\begin{split}
0 & = \Phi_i^*(r_j^a f(r_1,\ldots,r_n))= \Phi_{i+1}^*(r_j^a f(r_1,\ldots,r_n)) \\
  & = \Phi_{i+1}^*(r_j)^{a} \cdot  \Phi_{i+1}^*( f(r_1,\ldots,r_n)) .
\end{split}
\]
By graded factoriality, $\Phi_{i+1}^*(f(r_1,\ldots,r_n))$ has to be zero, since we can choose an $r_j$ such that $\Phi_{i+1}^*(r_j) \neq 0$.

\step{Step (C): If all pullbacks are zero, take a root along a line bundle}
Suppose now that $\Phi_i^*(r_j^p) = 0$ for all $j$. Since $p[D_{i+1}] \in \KK_i$, the pullback under the map $\Phi_i$ is defined.
Let $\Phi_i^*(p[D_{i+1}])= \LL_i \in \Pic(\XX_i)$. 
We define $\XX_{i+1} \coloneqq \sqrt[p]{\LL_i/\XX_i}$ by a line bundle root and we obtain a morphism  
$\Phi_{i+1}^* \colon \RR(Y;\KK_{i+1}) \to \RR(\XX_{i+1}) = \RR(\XX_i)$ by setting 
$\Phi_{i+1}^*(r_j) = 0$. Since $\KK_{i+1} \to \Pic(\XX_{i+1})$ is given by 
$[D_{i+1}] \mapsto \sqrt[p]{\LL_i}$, the homogeneity follows.

\medskip
After performing root constructions as either in Step (B) or (C), 
we are done if $\KK_i=\Cl(Y)$. Otherwise increase $i \mapsto i+1$ and return to Step (A).

\step{Minimality}
We have built the following tower of \MD-stacks, where the vertical maps are induced from the graded morphisms using \autoref{lem:algebra-map}:
\[
\xymatrix@R=3ex@C=1em{
& X \ar^{\phi}[r] & Y\\
\Phi_0 \colon & **[l] \Xass \eqqcolon \XX_0 \ar[r] \ar[u]  & \YY_0 \ar[u]\\
\Phi_1 \colon & \XX_1 \ar[r] \ar[u] & \YY_1  \ar[u]\\
              & \vdots       \ar[u] & \vdots \ar[u]\\
**[l] \Phi = \Phi_k \colon & **[l] \XX \coloneqq \XX_k \ar[r] \ar[u] & **[r] \YY_k = \YY^{\ass} \ar[u]\\
}
\]

To prove the minimality of $\Phi$ as in \eqref{itm:lift:3}, consider the following diagram:  
\[
\xymatrix@R=3ex{
&\Xass \ar[r] & \YY_0 \\
&\XX \ar[r] \ar[u] & \YY^{\ass} \ar[u]\\
\XX' \ar@/^/[uur]^{\Theta_0} \ar@/_/[rru]_{\Psi} \ar@{.>}[ur]|{\Theta}
}
\]
where $\XX'$ fulfills \eqref{itm:lift_quotient}. 
We can define the map $\Theta$ inductively as follows. Consider a single step:
\[
\xymatrix@R=3ex{
&\XX_{i} \ar[r]^{\Phi_i} & \YY_{i} \\
&\XX_{i+1} \ar[r]^{\Phi_{i+1}} \ar[u] & \YY_{i+1} \ar[u]\\
\XX' \ar@/^/[uur]^{\Theta_{i}} \ar@/_/[rru]_{\Psi_{i+1}}\ar@{.>}[ur]|{\Theta_{i+1}}
}
\]

If $\XX_{i+1}$ is obtained from $\XX_i$ as a root over the line bundle $\LL_i$, then the pullback $\Theta_i^*$ provides a map 
$\RR(\XX_{i+1}) = \RR(\XX_i) \to \RR(\XX')$. For the homogeneity we show that there is a unique arrow $\vartheta_{i+1}$ in the following commutative diagram:
\[  
\xymatrix@R=3ex@C=1em{ 
       & \Pic(\XX_i) \ar[d] \ar@/_/[ddl]_{\vartheta_i}   & \KK_i \ar[l]\ar[d] \\
       & \Pic(\XX_{i+1}) \ar@{.>}[dl]|{\vartheta_{i+1}} & \KK_{i+1} \ar[l] \ar@/^/[dll]^{\psi_{i+1}} \\    
\Pic(\XX') & &  }
\]
By commutativity $\psi_{i+1}(p[D_{i+1}])= \vartheta_i(\LL_i)$. Recall that 
\[
\Pic(\XX_{i+1})= \quot{\Pic(\XX_i) \oplus \ZZ}{\ZZ \cdot (\LL_i,-p)},
\]
thus we can define the dotted map $\vartheta_{i+1}$ by extending $\vartheta_i$ with $\sqrt[p]{\LL_i} = (\OO, 1) \mapsto \psi_{i+1}(D_{i+1}).$

Let now $\XX_{i+1}$ be obtained from $\XX_i$ by roots of divisors. We define the map $\Psi_{i+1}^* \colon \RR(Y;\KK_{i+1}) \into \RR(Y) \xto{\Psi^*} \RR(\XX')$. 
Consequently, we have $\Psi_{i+1}^*(r_j) \in \RR(\XX')$ with
\[
\Psi_{i+1}^*(r_j)^p = \Psi_i^*(r_j^p) = \Theta_{i}^* \circ \Phi_{i}^*(r_j^p) = 
\prod_{l=1}^m \Theta_{i}^*(\qq_l^{a_{l,j}}) = \prod_{l=1}^m \Theta_{i+1}^*(\qq_l)^{a_{l,j}} 
\]
Since  $\RR(\XX')$ is graded factorial, it contains elements $w_l$,
 such that $w_l^{b_l}= \qq_l$ where $b_l$ is still as above.
These elements are unique up to multiplication with a $b_l$-th root of unity $\xi$.
We define the map $\Theta_{i+1}^* \colon \RR(\XX_{i+1}) \dashrightarrow \RR(\XX')$ by sending $z_l \mapsto w_l$ for $l=1,\ldots, m$. 
Note that any other choice $z_l \mapsto \xi w_l$ gives the same map precomposed with an automorphism of $\RR(\XX_{i+1})$.
Its homogeneity is easy to see, 
as before we define the dotted morphism $\vartheta_{i+1}\colon\Pic(\XX_{i+1}) \to \Pic(\XX')$ by $\OO(\sqrt[b_l]{E_l}) \mapsto \OO(w_l)$, 
where $\OO(w_l)$ denotes the zero divisor of the function $w_l$.
To see that the resulting $\Theta_{i+1}^*\colon\RR(\XX_{i+1})\dashrightarrow\RR(\XX')$ is well-defined, take again a relation among the generators, and apply the same argument as for the statement that $\Phi_{i+1}^*$ is a homogeneous morphism.
\end{proof}

\begin{proof}[Proof of \hyperlink{maintheorem}{Main Theorem}]
This follows immediately from \autoref{thm:lift} taking \autoref{rem:complete} into account.
\end{proof}

Let us now see how to compute the Cox lift in some examples.

\begin{example}
\label{ex:A2_mod_mu2:closer}
We apply the proof of \autoref{thm:lift} to \autoref{ex:A2_mod_mu2} from the introduction.
So we start with the map
\[
\FctArray{
\Phi_0^* = \phi^* \colon & \kk[x^2,xy,y^2] & \to & \kk[t]\\
& x^2 & \mapsto & t\\
& xy,y^2 & \mapsto & 0
}
\]
At this point we have that $\KK_0 = \Pic(Y) = \{\OO\} \neq \Cl(Y) = \{\OO,\OO(D)\}$.
Therefore, we only need to make one step and add the sections of $\OO(D)$, to get
\[
\RR(Y) = \kk[x,y] =  \RR(Y;\Pic(Y))[x,y].
\]
For any section $ax+by$ of $\OO(D)$, its square will be a Cartier section of $\OO$ and its pullback is
\[
\phi^*( (ax+by)^2 ) = \begin{cases} t  & \text{if $a\neq0$;}\\ 0 & \text{if $a=0$.}\end{cases}
\]
Hence for the generator $x$ we have to perform a $2$nd-root construction on $X$, 
whereas for the second generator $y$, there is nothing to do, since its restriction to the image of $\phi$ is zero.
This yields the stack
\[
\XX = \XX_1 =  \stackquot{\Spec \kk[t,z]/\langle t^2 - z \rangle }{ \mu_2 } = \stackquot{\Spec \kk[\sqrt{t}]}{\mu_2}.
\]
Therefore we arrive at the conclusion that the Cox lift of $\phi\colon\AA^1 \into \AA^2/\mu_2$ is
$\Phi\colon\sqrt[2]{0/\AA^1} \into [\AA^2/\mu_2]$, as already obtained in the introduction.
\end{example}

\begin{example}
As a variation of the previous example, we can consider the inclusion $\{0\} \into Y = \AA^2/\mu_2$ as the singular point.
Note that $\Cl(Y)\cong\ZZ_2$ is generated by a Weil divisor $D$ through zero.
The ring $\RR(Y;\Pic(Y))$ would be naively given by the sections of $2D$, but for the pullback (following the proof of \autoref{lem:inducedpicmap}) we consider instead the sections of $\OO_Y$.

Then the square power of any global section of the Weil divisor $D$ will restrict to zero on the image of the inclusion $\{0\} \into Y$. All these sections can be identified with sections of $\OO_Y$ and can be pulled back. As said all pullbacks vanish, so we are in the case that we take a line bundle root of the trivial line bundle on $\{0\}$.
Hence the Cox lift is
$\BB \mu_2 = [\{0\}/\mu_2] \into [\AA^2/\mu_2]$.

This fits the geometric interpretation given in \autoref{ex:A2_mod_mu2}, as $\{0\}$ is the singular point of $\AA^2/\mu_2$, and the corresponding lift is the inclusion of the origin of $[\AA^2/\mu_2]$ which has a $\mu_2$-stabiliser.

The original $\{0\}$ also fulfills the first two properties of the Cox lift but is not universal, as the map $\BB \mu_2 \into [\AA^2/\mu_2]$ does not factor over $\{0\}$.
\end{example}

\begin{example}
\label{ex:mu3}
Let $\phi\colon X \to Y$ be a map of  Mori dream spaces, where $Y = \AA^2/\mu_3$ is the $\frac13(1,2)$-singularity.
Here $\KK_0 = \Pic(Y) = \{\OO\}$ and 
\[
\RR(\YY_0) = \RR(Y;\Pic(Y)) = \kk[x^3,xy,y^3].
\]
We choose $\OO(D)$ in such a way that its global sections are generated by $x$ and $y^2$ as an $\RR(Y;\Pic(Y))$-module, likewise  $H^0(Y,\OO(2D)) = \genby{x^2,y}$.
Note that $\Cl(Y) = \{\OO,\OO(D),\OO(2D)\}$ and $\RR(Y) = \kk[x,y]$.
 
Following the proof of \autoref{thm:lift}, we lift the homomorphism
\[
\FctArray{
\Phi_0^*\colon & \kk[x^3,xy,y^3]  & \to & \RR(X) = \RR(\XX_0)\\
& x^3 & \mapsto & u \phantom{=\RR(\XX_0)}\\
& xy  & \mapsto & v \phantom{=\RR(\XX_0)}\\
& y^3 & \mapsto & w \phantom{=\RR(\XX_0)}
}
\]
with $uw-v^3=0$, to a homomorphism
\[
\Phi^* = \Phi_1^* \colon \RR(Y) = \RR(Y;\Pic(Y))[x,y] = \kk[x,y] \to \RR(\XX_1) = \RR(\XX).
\]
To define it, we consider the generators $x$ and $y$ and use that $\Phi_0^*(x^3) = u$ and $\Phi_0^*(y^3) = w$.

\step{Case $u=0=w$} In this case also the lifted map $\Phi^*$ will be the zero map, but a third line bundle root has to be performed. Hence $\RR(\XX) = \RR(X)$ graded by $\Pic(\XX) = \quot{\Cl(X) \otimes \ZZ}{\ZZ\cdot(\OO(D),-3)}  \onto \Cl(X)$.

\step{Case $u$ or $w$ is non-zero}
Here we need to introduce the necessary third roots for those h-irreducible factors of $u$ and $w$ which do not appear with a multiplicity divisible by $3$ and get
\[
\FctArray{
\Phi^* \colon & \kk[x,y] & \to & \RR(\XX) \\
 & x & \mapsto & \xi^i \sqrt[3]{u} \\
 & y & \mapsto & \xi^j \sqrt[3]{w}
}
\]
where $\xi$ is a primitive third root of unity.
Finally, note that $v = \xi^k \sqrt[3]{u}\sqrt[3]{w}$. 
As we want $v = \Phi^*(x) \cdot \Phi^*(y) = \xi^{i+j} \sqrt[3]{u} \sqrt[3]{w}$, we have to choose $i,j \in \ZZ_3$ in such a way that $i+j = k$.
\end{example}

\begin{example}
We modify the previous example to a map $\phi\colon X \to Y$ where $Y = \AA^2/\mu_4$ is the $\frac14(1,2)$-singularity.
Here $\KK_0 = \Pic(Y) = \{\OO\}$, $\RR(\YY_0) = \RR(Y;\Pic(Y)) = \kk[x^4,x^2y,y^2]$, 
$\Cl(Y) = \ZZ_4$ and $\RR(Y)= \kk[x,y]$.
Following the proof of \autoref{thm:lift}, we lift the homomorphism
\[
\FctArray{
\Phi_0^* \colon & \RR(\YY_0) = \RR(Y;\Pic(Y)) = \kk[x^4,x^2y,y^2] &\to& \RR(X) = \RR(\XX_0)\\
& \phantom{\RR(\YY_0) = \RR(Y;\Pic(Y)) = } x^4 & \mapsto & u \phantom{ = \RR(\XX_0)}\\
& \phantom{\RR(\YY_0) = \RR(Y;\Pic(Y)) = } x^2y& \mapsto & v \phantom{ = \RR(\XX_0)}\\
& \phantom{\RR(\YY_0) = \RR(Y;\Pic(Y)) = } y^2 & \mapsto & w \phantom{ = \RR(\XX_0)}
}
\]
where $u,v,w$ fulfill the relation $uw=v^2$.
Let $D_1 = \{y=0\}$, it is a divisor of prime order $2$ inside $\Cl(Y)$ and $H^0(Y,D) = \genby{x^2,y}$ as a $\RR(Y;\Pic(Y))$-module. So in the first step of the proof we get $\RR(\YY_1) = \kk[x^2,y]$ and the map
\[
\FctArray{
\Phi_1^* \colon & \RR(\YY_1)=\kk[x^2,y] & \to & \RR(\XX_1)\\
&\phantom{\RR(\YY_1)=} x^2 & \mapsto & (-1)^i \sqrt{u} \\
&\phantom{\RR(\YY_1)=} y & \mapsto & (-1)^j\sqrt{w}
}
\]
Note that $v=(-1)^k \sqrt{u}\sqrt{w}$. Like in the previous example, $i,j\in \ZZ_2$ have to be chosen such that $i+j=k$.  
In a second step, we add the divisor $D_2 = \{x=0\}$ and get $\RR(\YY_2) = \RR(\Ycan) = \RR(Y) = \kk[x,y]$ and the map
\[
\FctArray{
\Phi_2^* \colon & \RR(\YY)= \RR(\YY_2)=\kk[x^2,y] & \to & \RR(\XX_2)=\RR(\XX)\\
& \phantom{\RR(\YY)= \RR(\YY_2)=} x & \mapsto & \sqrt{-1}^{2l+i} \sqrt[4]{u}\\
& \phantom{\RR(\YY)= \RR(\YY_2)=} y & \mapsto & (-1)^j\sqrt{w}
}
\]
with no condition on $l\in \ZZ_2$. 

We just remark that we could have also added the sections of the divisor $D_2$ and its multiples already in a single step and arrive immediately at
\[
\FctArray{
\Phi^* \colon & \kk[x,y] & \to & \RR(\XX)\\
& x & \mapsto & \sqrt{-1}^h \sqrt[4]{u}\\
& y & \mapsto & (-1)^j\sqrt[2]{w}
}
\]
where $h\in \ZZ_4$ has to satisfy $k = 2h +j$.
For the added roots one has to be a bit careful, otherwise graded factoriality will be lost. Let $\mathfrak{q}$ be an h-irreducible factor of multiplicity $a_u$ and $a_v$ in $u$ and $v$, respectively. Denote by $a$ the greatest common divisor of $a_u$ and $a_v$. Then we add no root of $\mathfrak{q}$ if $a=0 \in \ZZ_4$, a square root $\sqrt{\mathfrak{q}}$ if $a=2\in\ZZ_4$ and only a fourth root $\sqrt[4]{\mathfrak{q}}$ if $a$ is invertible in $\ZZ_4$.
\end{example}

\begin{remark}
For the universal property, we require $\RR(\XX)$ to be graded factorial.
Especially, the Cox ring of the fibre product $\XX' = \YY^{\ass} \times_{\YY_0} \Xass$ is not graded factorial, hence will not serve as a Cox lift in general.

Actually, this applies already to \autoref{ex:A2_mod_mu2}. There the fibre product will be given by the $\ZZ_2$-graded algebra
\[
\kk[\sqrt t,u]/u^2,
\]
which essentially comes from adding $u$ as the square root of $0$ (which is the image of $y$), that introduces a zero divisor spoiling graded factoriality.
\end{remark}

\begin{remark}
Given a map $\phi\colon X\to Y$ of Mori dream spaces and its Cox lift $\Phi\colon\XX \to \YY^\ass$, note that $\XX$ will not be smooth in general, even if we assume that $X$ (or $\Xass$) is smooth.
The key point is that when constructing $\XX$, the divisors along which roots are taken may not be simple normal crossing, which is equivalent to $\XX$ being smooth, see \cite[Cor.~1.23]{HM}.
\end{remark}

\section{An application to Mori dream stacks}
\label{sec:application}

\begin{definition}
Let $\XX$ be an algebraic stack separated and of finite type over $\kk$.
We call $\XX$ a \emph{Mori dream stack} (or \emph{\MD-stack} for short) if it fulfills the conditions (1)--(4) of \autoref{def:mdquotientstack}.
\end{definition}

\begin{remark}
The central property of a Mori dream stack here is that its Cox ring is graded factorial and normal. This differs from the definition we gave in the previous article \cite[Def.~2.3]{HM}, where we asked for $\XX$ to be smooth.
\end{remark}

\begin{theorem}
\label{thm:mdquotients-are-roots}
Let $\XX$ be a Mori dream stack whose coarse moduli space $X$ is a Mori dream space with $H^0(\Spec \RR(X) \setminus V,\OO^*)=\kk^*$ and $\Pic(\Spec \RR(X) \setminus V) = 0$.

Then $\XX$ is an \MD-quotient stack if and only if $\XX$ is obtained by root constructions along prime divisors and line bundles.
\end{theorem}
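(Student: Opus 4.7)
For the ``if'' direction, the argument will be a straightforward iteration: by \autoref{prop:Xass} the canonical stack $\Xass$ is itself an \MD-quotient stack under the stated hypotheses, and each root construction along a prime divisor or a line bundle preserves the \MD-quotient property by \autoref{prop:roots-preserve-MD} together with \autoref{rem:space_to_stack}. Hence any finite sequence of such roots from $\Xass$ again produces an \MD-quotient stack.

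For the ``only if'' direction, my plan is to reverse-engineer the tower construction from the proof of \autoref{thm:lift}, this time aiming at the given $\XX$ rather than at a Cox lift. First I would produce a canonical morphism $\XX \to \Xass$ and use it to obtain a homogeneous embedding $\RR(\Xass) = \RR(X) \hookrightarrow \RR(\XX)$ compatible with an inclusion of grading groups $\Cl(X) = \Pic(\Xass) \hookrightarrow \Pic(\XX)$. Since $\Xass$ is \emph{a} canonical stack of $X$ in the sense of \cite[\S 1.2]{Fantechi-etal}, such a morphism should arise from the universal property of canonical stacks; comparing stabilisers (using that both stacks have coarse moduli $X$) should further show that $\Pic(\XX)/\Cl(X)$ is a finite abelian group.

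With these ingredients in hand, I would filter $\Cl(X) = \KK_0 \subset \KK_1 \subset \cdots \subset \KK_k = \Pic(\XX)$ with each $\KK_{i+1}/\KK_i$ cyclic of prime order $p_{i+1}$ and inductively build \MD-quotient stacks $\XX_i$ (starting with $\XX_0 \coloneqq \Xass$) such that $\Pic(\XX_i) = \KK_i$ and $\RR(\XX_i)$ coincides with the sub-algebra of $\RR(\XX)$ of degrees in $\KK_i$. Given a generator $\ell$ of $\KK_{i+1}/\KK_i$: if there is a non-zero homogeneous $z \in \RR(\XX)_\ell$, then $z^{p_{i+1}} \in \RR(\XX_i)$ factors into h-irreducibles by graded factoriality, and I would perform the simultaneous prime-divisor root exactly as in Step~(B) of the proof of \autoref{thm:lift}; otherwise I would take a line bundle root along the class $p_{i+1}\ell \in \KK_i$ as in Step~(C). \autoref{lem:graded_ufd} keeps each $\RR(\XX_{i+1})$ graded factorial.

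The hard part will be identifying $\XX_k$ with the given $\XX$, rather than obtaining merely a morphism between the two. The key point is that graded factoriality of $\RR(\XX)$ forces each newly adjoined root element of $\RR(\XX_{i+1})$ to coincide, up to a root of unity, with an honest h-irreducible element of $\RR(\XX)$ of the appropriate degree, via the same $\ZZ_{p_{i+1}}$-linear system argument that fixes the roots of unity $\xi^{\alpha_j}$ in Step~(B) of the proof of \autoref{thm:lift}. After $k$ steps, $\XX_k$ and $\XX$ will share the same Cox ring, Picard group, and irrelevant locus, and hence coincide by the quotient presentation of \autoref{def:mdquotientstack}.
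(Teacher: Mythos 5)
Your proposal follows essentially the same route as the paper: the ``if'' direction is identical (\autoref{prop:Xass} plus \autoref{prop:roots-preserve-MD}), and for ``only if'' the paper likewise constructs $\XX \to \Xass$ from the coarse moduli map, identifies $\RR(\Xass)=\RR(X)=\RR(\XX;\pi^*\Cl(X))$, and reruns the tower construction from the proof of \autoref{thm:lift} on the identity of $X$ to produce a stack $\XX'$ obtained by roots from $\Xass$. The only cosmetic difference is that the paper concludes $\XX\cong\XX'$ by appealing to the minimality property (part (3)) of \autoref{thm:lift}, whereas you inline the same graded-factoriality argument to match the adjoined roots with h-irreducible elements of $\RR(\XX)$ directly.
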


\begin{proof}
For the reverse implication, note that by \autoref{prop:Xass} the canonical \MD-stack of $X$ is a \MD-quotient stack. This property is preserved by root constructions along prime divisors and line bundles due to \autoref{prop:roots-preserve-MD}.

For the forward implication consider the diagram
\[
\xymatrix{
& \XX \ar[d]\\
\Xcan \ar[r]_{\id} \ar[d]  & \Xcan \ar[d]\\
X \ar[r]_{\id} & X
}
\]
where the map $\XX \to \Xcan$ of quotient stacks is given as follows.
The coarse moduli map $\pi\colon \XX \to X$ induces the pullback $\Cl(X) = \Pic(\Xcan) \into \Pic(\XX)$. From this we get a graded inclusion $\RR(X)=\RR(\Xcan) \into \RR(\XX)$, because $H^0(X,\OO(D)) = H^0(\XX,\pi^*\OO(D))$ for any divisor $D$ on $X$. This in turn gives $\XX \to \Xcan$.

Furthermore $\RR(X) = \RR(\Xcan) =  \RR(\XX;\pi^* \Cl(X))$, where the first equality is \autoref{prop:Xass} and the second one follows from the argument above.
Using the proof of \autoref{thm:lift}, we can lift the map $\id\colon \Xcan \to \Xcan$, seen as induced by $\id^* \colon \RR(\XX;\pi^* \Cl(X)) \to \RR(\Xcan)$ to $\mathrm{Id} \colon \XX' \to \XX$, where $\XX'$ is built by roots form $\Xcan$ along prime divisors and line bundles. Moreover, $\XX'$ fits into the top left corner of the diagram above, as $\XX$ itself. By the minimality property, $\XX$ and $\XX'$ are isomorphic.
\end{proof}

\begin{remark}
If $\XX$ is a smooth Mori dream stack, than its coarse moduli space $X$ is a Mori dream space by \cite[Thm.~3.1]{HM}.
Moreover, in that case the canonical stack $\Xass$ of $X$ is a canonical \MD-stack by \cite[Thm.~2.7]{HM}. Consequently, we get an isomorphism $\Cl(X) = \Pic(\Xass)$, so  the technical assumptions $H^0(\Spec \RR(X) \setminus V,\OO^*)=\kk^*$ and $\Pic(\Spec \RR(X) \setminus V) = 0$ are fulfilled as well, by \autoref{rem:KKV_sequence}.
\end{remark}

\begin{remark}
In the analogous statement of our previous article \cite[Main Theorem (2)]{HM}, we relate smooth \MD-quotient stacks with stacks obtained by roots along simple normal crossing divisors (and line bundles).
\autoref{thm:mdquotients-are-roots} generalises this statement in the following sense: if $\XX$ is a smooth \MD-stack (as in \cite[Def.~2.3]{HM}) with a simple normal crossing ramification divisor, then its Cox ring is graded factorial and normal.

Let $\XX$ be such a stack.  By \cite[Thm.~3.1]{HM}, its coarse moduli space $X$ is a  Mori dream space, thus $\RR(X)$ is factorial and normal. By \cite[Prop.~2.7]{HM}, the smooth canonical stack $\Xcan$ is a \MD-stack with $\RR(\Xcan)=\RR(X)$.

Now, if $\XX$ is a smooth orbifold, by \cite[Prop.~3.2]{HM} it is obtained from $\Xcan$ by roots along simple normal crossing divisors. Then its Cox ring is graded factorial and normal by \autoref{lem:algebra-map} and \autoref{lem:cover-normal}.

If $\XX$ is not an orbifold, then it is a gerbe over its rigidification $\Xrig$, which is a smooth orbifold. Since gerbe constructions leave the Cox ring unchanged by \cite[Thm.~2.10]{HM}, we can conclude that also in this case $\RR(\XX)$ is graded factorial and normal.
\end{remark}

\addtocontents{toc}{\protect\setcounter{tocdepth}{-1}}


\end{document}